\documentclass [11pt]{article}
\usepackage{amssymb,amsmath,comment,amsthm}

\def\N{\mathbb{N}}

\def\F{\mathbb{F}}

\newtheorem{theorem}{Theorem}[section]
\newtheorem{proposition}[theorem]{Proposition}
\newtheorem{corollary}[theorem]{Corollary}
\newtheorem{lemma}[theorem]{Lemma}
\newtheorem{definition}[theorem]{Definition}
\newtheorem{remark}[theorem]{Remark}
\newtheorem{remarks}[theorem]{Remarks}

\newtheorem{examples}[theorem]{Examples}
\newtheorem{coroll-def}[theorem]{Corollary-Definition}

\begin{document}
\title{Admissible family for binary perfect polynomials}
\author{ Luis H. Gallardo  and Olivier Rahavandrainy\\
Univ. Brest, UMR CNRS 6205\\
Laboratoire de Math\'ematiques de Bretagne Atlantique\\
6, Avenue Le Gorgeu, C.S. 93837, 29238 Brest Cedex 3, France.\\
e-mail: Luis.Gallardo@univ-brest.fr \\
Olivier.Rahavandrainy@univ-brest.fr}
\maketitle

\begin{itemize}
\item[a)]
Running head: Odd prime divisors
\item[b)]
Keywords: admissible, sum of divisors, finite fields,
characteristic $2$.
\item[c)]
Mathematics Subject Classification (2010): 11T55, 11T06.
\item[d)]
Corresponding author:
\begin{center} Luis H. Gallardo
\end{center}
\end{itemize}
\newpage~\\

\begin{abstract}
The paper is about an arithmetic problem in $\F_2[x]$. We give \emph{admissible} (necessary) conditions satisfied by a set of odd prime divisors of perfect polynomials over $\F_2$.
This allows us to prove a new characterization of \emph{all} known
perfect polynomials, and to open a way of finding more of them (if they exist).
\end{abstract}

\section{Introduction} \label{intro}
Let $A \in \F_2[x]$ be a nonzero polynomial. We say that $A$ is \emph{even} if it has a linear factor
and it is \emph{odd}, otherwise. We define a
\emph{Mersenne prime} over $\F_2$ as an irreducible polynomial of the form
$1+x^a(x+1)^b$, for some positive integers $a,b$. More generally, we define a \emph{prime} as an irreducible polynomial.
See \cite{Gall-Rahav-mersenn}, for links between Mersenne primes and irreducible binary trinomials.
We denote by $\omega(A)$ (resp. $\sigma(A)$) the number of distinct irreducible factors (resp. the sum of all divisors) of $A$ over $\F_2$
($\sigma$ is a multiplicative
function). $A$ \emph{splits} if $A$ is even and $\omega(A) \leq 2$. We call $A$ \emph{perfect} if $\sigma(A) = A$. Finally, a perfect polynomial is \emph{indecomposable} if it does not factor in two coprime nonconstant perfect ones.

We also denote by:\\
- $\rm{rad}(A)$, the \emph{radical} of $A$: the product of all the distinct prime divisors of $A$ in $\F_2[x]$,\\
- $\N$ ($\N^*$), the set of (positive) natural numbers,\\
- $A'$, the formal derivative of $A \in \F_2[x]$ relative to $x$.

Given $k \in \N^*$ and $A, P \in \F_2[x]$ with $P$ irreducible, we write:
$$\text{$P^k \| A$ if $P^k \mid A$ but $P^{k+1} \nmid A.$}$$
For $Q \in \F_2[x]$ odd, we put $Q^{\langle{a,b,c}\rangle} := 1+x^a(x+1)^bQ^c$, $\overline{Q}$ the polynomial obtained from $Q$, by substituting $x$ by $x+1$
and $\displaystyle{Q^*(x) := x^{\deg(Q)} \cdot Q(\frac{1}{x})}$ (the reciprocal of $Q$). We remark that $\overline{Q^{\langle{a,b,c}\rangle}} = \overline{Q}\ ^{\langle{b,a,c}\rangle}.$

Polynomials below are important in our work. The $M_j$'s and the $S_k$'s are all irreducible (see Lemma \ref{allirreduc}).\\
$\begin{array}{l}
M_1=1+x+x^2,\ M_2=1+x+x^3,\ M_3=\overline{M_2}=1+x^2+x^3,\\
M_4=1+x+x^2+x^3+x^4, M_5=\overline{M_4} = 1+x^3+x^4,\
M_6=1+x^3+x^5,\\
M_7=1+x^3+x^7,\ M_8=1+x^6+x^7,\
M_9=\overline{M_6},\  M_{10}=\overline{M_7},\  M_{11}=\overline{M_8},\\
M_{12} = x^9+x+1, M_{13}=\overline{M_{12}}=x^9+x^8+1,
\end{array}$\\
$\begin{array}{l}
T_1 =x^2(x+1)M_1, \ T_2=\overline{T_1},\
T_3 = x^4(x+1)^3M_4,\ T_4 =\overline{T_3},\\
T_5 = x^4(x+1)^4M_4\overline{M_4} = \overline{T_5},\
T_6 = x^6(x+1)^3M_2\overline{M_2}, \ T_7= \overline{T_6},\\
T_8 = x^4(x+1)^6M_2\overline{M_2} M_4,\ T_9 = \overline{T_8},\\
T_{10} = x^2(x+1){M_1}^2 (1+x+x^4),\ T_{11} = \overline{T_{10}},
\end{array}$\\
$\begin{array}{l}
S_1 ={M_1}^{\langle{1,1,1}\rangle} = \overline{S_1},\ S_2 = {M_1}^{\langle{2,2,1}\rangle},\
S_3  = {M_1}^{\langle{1,3,4}\rangle}, \ S_4 = {M_1}^{\langle{3,1,1}\rangle}, \\
S_5  = {M_1}^{\langle{1,3,1}\rangle},\
S_{6} = {M_1}^{\langle{3,1,4}\rangle},\ S_{7} = {M_1}^{\langle{1,1,3}\rangle}, \
S_{8} = {M_1}^{\langle{3,3,1}\rangle},\\
S_9 = {M_1}^{\langle{1,1,5}\rangle}, \
S_{10} = {M_1}^{\langle{4,1,1}\rangle},\
S_{11} = {M_1}^{\langle{1,2,1}\rangle}, \ S_{12} = {M_1}^{\langle{2,1,2}\rangle}, \\
S_{13} = {M_1}^{\langle{1,4,1}\rangle},\
S_{14} = {M_1}^{\langle{2,1,1}\rangle}, \ S_{15} = {M_1}^{\langle{1,2,2}\rangle}.
\end{array}$\\
We set ${\cal{F}}_1 := \{M_1, \ldots, M_{13}\}$, ${\cal{F}}_2 :=\{S_1,\ldots,S_{15}\}$ and ${\cal{F}} := {\cal{F}}_1 \cup {\cal{F}}_2$.\\

The following facts are well-known \cite{Canaday}. Besides \emph{trivial} perfects (of the form  $x^{2^n-1}(x+1)^{2^n-1}$, with $n \in \N^*$), there are only $11$ known perfects, all of them are even, namely $T_1,\ldots, T_{11}$. There is no other perfect polynomial $A$ with $\omega(A)<5$ (see \cite{Gall-Rahav4,Gall-Rahav5,Gall-Rahav7}). Recently, Cengiz et al. \cite{Po} proved by extensive computations that there is no other perfect polynomial $A$ with $\deg(A) \leq 200$.

Odd prime factors of the $T_j$'s are all Mersenne primes, except: $S_1 =1+x+x^4 = 1+x(x+1)M_{1}$. More precisely, $T_1, \ldots, T_9$ are the unique perfects of the form
$\displaystyle{x^a(x+1)^b \prod_j {P_j}^{h_j}}$, with all the $P_j$'s Mersenne primes and $a,b,h_j \in \N$ (\cite[Theorem 1.1]{Gall-Rahav16}). The last two: $T_{10}$ and $T_{11}$ are the unique of the form $x^a(x+1)^b M^{2h} \sigma(M^{2h})$, with $M$ a Mersenne prime and $a,b, h \in \N^*$ (\cite[Theorem 1.4]{Gall-Rahav13}).

We would like to extend the set of such odd primes (\emph{admissible family}) in order to discover new perfect polynomials. In this paper, we consider the family ${\cal{F}}$ defined above. We recall in Section \ref{admissible},
how and why we choose its members: $M_1,\ldots, M_{13}$ and $S_1, \ldots, S_{15}$. For more details, see \cite{Gall-Rahav15}.

Canaday \cite[Theorem 16, Theorem 20]{Canaday}
stated that some even (resp. odd) perfect polynomial $A$
with special factorization is uniquely determined by the exponents of $x$ and of $x+1$
(resp. by any odd prime divisor of $A$).
Our goal is to prove that if the radical of an even non-splitting perfect polynomial factors in $\{x,x+1\} \cup {\cal{F}}$, then we exactly get those eleven known (no more ones). Perhaps, by choosing a bigger admissible family, one would obtain new perfect polynomials...

\begin{theorem} \label{mainresult1}
Let $A$ be an even non-splitting binary polynomial,
with all odd prime divisors in  ${\cal{F}}$.
Then, $A$ is indecomposable perfect
if and only if $A, \overline{A} \in \{T_1, \ldots, T_{11}\}$.
\end{theorem}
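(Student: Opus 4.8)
We need to prove a characterization theorem. The "if" direction should be essentially a verification — we need to check that each $T_j$ (and its conjugate $\overline{T_j}$, which by the closure properties of $\mathcal F$ under bar also has all odd prime divisors in $\mathcal F$) is indecomposable perfect. This is presumably known from Canaday and the authors' earlier work, so it reduces to confirming the odd prime divisors of $T_1,\dots,T_{11}$ lie in $\mathcal F_1 \cup \mathcal F_2$ — which is exactly the content of the remarks preceding the theorem (all Mersenne primes appearing are among $M_1,\dots,M_{13}$, plus $S_1$).

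The hard direction is "only if." So let $A$ be even, non-splitting, indecomposable perfect, with all odd prime divisors in $\mathcal F$. Write $A = x^a(x+1)^b \prod_{P \in \mathcal S} P^{e_P}$ with $\mathcal S \subseteq \mathcal F$ nonempty (non-splitting forces at least one odd prime, and $\omega(A) \ge 3$). The structure of the argument should be:

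1. **Reduce to a finite combinatorial search.** Since $\sigma$ is multiplicative and $\sigma(A)=A$, for each prime power $P^{e_P} \| A$ the polynomial $\sigma(P^{e_P}) = 1 + P + \cdots + P^{e_P}$ must itself factor over $\{x,x+1\} \cup \mathcal F$. The key constraint: $\sigma(P^{2k}) = \sigma(P^2)^{?}$... more precisely one uses the classical fact (Canaday) that if $P$ is an odd prime and $P^{2k} \| A$ then actually the exponent is essentially $2$ in disguise, and $\sigma(P^2) = P^2 + P + 1$ must split into allowed primes; if $P^{2k+1}\|A$ then $(P+1) \mid \sigma(P^{2k+1})$ gives a linear factor constraint. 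For each of the $28$ candidate primes $P \in \mathcal F$ and each small exponent $e$, one computes $\sigma(P^e)$ and asks whether its irreducible factorization uses only primes in $\{x,x+1\}\cup\mathcal F$. This is a finite check (bounded by the degree bounds forced below) that eliminates almost all combinations.

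2. **Bound the exponents and $\omega(A)$.** Using $\deg \sigma(A) = \deg A$ together with the multiplicativity, and the fact that each allowed odd prime has bounded degree ($\le 9$), derive an a priori bound on $\deg A$, hence on $\omega(A)$ and on each exponent $e_P$. Combined with step 1, this leaves finitely many candidate factorization shapes.

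3. **Impose the even/non-splitting and indecomposability constraints, and the "conjugation symmetry."** The map $x \mapsto x+1$ permutes $\mathcal F$ (the paper records $\overline{M_i}$ and notes $\overline{Q^{\langle a,b,c\rangle}} = \overline Q^{\langle b,a,c\rangle}$), so one may work up to this involution. Indecomposability rules out any sub-product being perfect — in particular rules out the trivial-times-something splittings. Then a case analysis on which $M_j$'s (and which $S_k$'s) divide $A$ pins down $a,b$ via $\sigma(x^a)=(x+1)^?$-type relations and the Mersenne structure $1+x^a(x+1)^b$, forcing $A$ into the list $\{T_1,\dots,T_{11}\}$ up to bar. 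Here one leans heavily on \cite[Theorem 1.1]{Gall-Rahav16} and \cite[Theorem 1.4]{Gall-Rahav13} quoted above, which already handle the all-Mersenne case and the $M^{2h}\sigma(M^{2h})$ case; the new work is showing the $S_k \in \mathcal F_2 \setminus \{S_1\}$ (the non-Mersenne $Q^{\langle a,b,c\rangle}$'s) can never actually occur in a perfect $A$, by showing $\sigma(S_k^e)$ always introduces a forbidden prime.

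**Main obstacle.** The crux — and the genuinely new content — is step 1/step 3 combined: systematically showing that bringing in any of the $S_k$ (for $k \ge 2$), or any $M_j$ not already used in some $T_i$, forces $\sigma$ of some prime-power block to have an irreducible factor outside $\{x,x+1\}\cup\mathcal F$, thereby contradicting perfection. This is a finite but delicate factorization bookkeeping over $\F_2[x]$: one must compute $\sigma(P^e)$ for every $P \in \mathcal F$ and every admissible $e$, factor each into irreducibles, and check membership — and organize the cases so the argument is humanly verifiable (exploiting the bar-involution to roughly halve the work, and exploiting degree bounds to cap $e$). I expect the proof to be a long lemma-driven case analysis: a handful of structural lemmas (exponent bounds, the $\sigma(P^2)$ trick, closure under bar) followed by an exhaustive but carefully pruned enumeration, converging on the eleven known perfects.
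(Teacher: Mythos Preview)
Your high-level outline matches the paper's strategy, but Step~2 as you have written it contains a genuine gap. The equation $\deg\sigma(A)=\deg A$ is a tautology and cannot by itself bound $\deg A$; nor does the fact that every prime in $\mathcal F$ has degree $\le 9$ help, since the exponents could a priori be arbitrarily large (witness the trivial perfects $x^{2^n-1}(x+1)^{2^n-1}$). The paper's actual mechanism is quite different and you have not identified it. One writes every exponent as $2^{n}u-1$ with $u$ odd, so that $\sigma(P^{2^{n}u-1})=(1+P)^{2^{n}-1}\bigl(\sigma(P^{u-1})\bigr)^{2^{n}}$. The odd part $u$ is bounded by computer-checking, for each $P\in\{x,x+1\}\cup\mathcal F$, exactly which $\sigma(P^{2h})$ factor entirely in $\mathcal F$; this search is finite because one first proves $\sigma(P^{2h})$ is \emph{square-free} (a nontrivial lemma you do not mention), so its degree is at most $\sum_{D\in\mathcal F}\deg D=184$. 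The $2$-adic parts $n,m,n_i,m_j$ are then bounded not by degrees but by writing out the exact exponent of each prime in $\sigma(A)$ as a short sum of powers of~$2$ with $\{0,1\}$ coefficients (Lemma~2.16 in the paper) and matching it against $2^{n_i}u_i-1$; this is a $2$-adic combinatorial argument, and it even needs the $\omega(A)\le 4$ classification as input at one point.

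Two smaller corrections. First, the paper does not ``lean heavily'' on \cite{Gall-Rahav16} and \cite{Gall-Rahav13} to dispose of the all-Mersenne and $M^{2h}\sigma(M^{2h})$ cases separately; it runs a single uniform argument and the final elimination is an explicit Maple search over roughly $10^4$ parameter tuples, not a humanly verifiable case analysis. Second, your parenthetical ``the exponent is essentially $2$ in disguise'' is wrong: the odd parts $u,v$ of the exponents of $x,x+1$ range over $\{1,3,5,7,9,13,15\}$, and $u_1$ for $M_1$ over $\{1,3,5,7,15\}$, so one genuinely needs the tables of $\sigma(x^{2h})$, $\sigma(M^{2h})$, $\sigma(S^{2h})$ that the paper compiles.
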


The proof of this theorem shows a ``kind of algorithm'' to give (at most) even perfect polynomials with a given admissible family.

Our method requires some simple computer calculations. So, we believe that it should be able to find some new perfect polynomials $A$ (with $\omega(A)$ or $\deg(A)$ moderate large), if they exist.

By the same method, in \cite{Rahav}, we can characterize all the known even non-splitting unitary perfect polynomials over $\F_2$ (listed in \cite{BeardU2}) and we discover many new ones.

\begin{remark} \label{nosotrivial}
\emph{For a given admissible family ${\cal{G}}$, a binary polynomial $A$ such that $\rm{rad}(A)$ is a product of members of ${\cal{G}}$, may have a potentially arbitrary factor of the form $Q^{m}$, with $m \in \N^{*}$.
In other words, Theorem \ref{mainresult1} requires some work to be proved, although we assume that $\omega(A) \leq 30$ (instead to be an arbitrary positive integer).}
\end{remark}

\section{Useful facts} \label{usefulfacts}

\subsection{Admissible family} \label{admissible}

We get Definition \ref{defadmissible} and Corollary-Definition \ref{oddadmissible}, inspired by Lemmas \ref{factorx2h} and \ref{bar-star-stable}.
\begin{lemma} \label{factorx2h}
Let $h \in \N^*$. Then, for any prime factor $P$ of $\sigma(x^{2h})$, $P^*$ $($resp. $\overline{P})$ also divides $\sigma(x^{2h})$ $($resp. $\sigma((x+1)^{2h}))$.
\end{lemma}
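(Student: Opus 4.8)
The plan is to work with the explicit factorization of $\sigma(x^{2h})=1+x+\cdots+x^{2h}=\frac{x^{2h+1}+1}{x+1}$ over $\F_2$ and exploit two self-map symmetries of this polynomial: the reciprocal map $Q\mapsto Q^{*}$ and the substitution $x\mapsto x+1$, i.e. $Q\mapsto\overline{Q}$. First I would record that $\sigma(x^{2h})$ is the product of the cyclotomic polynomials $\Phi_{d}$ (reduced mod $2$) over the divisors $d>1$ of $2h+1$; since $2h+1$ is odd, $2$ is a unit mod $d$ for each such $d$, so each $\Phi_{d}$ is separable over $\F_2$ and its irreducible factors are exactly the minimal polynomials of the primitive $d$-th roots of unity in $\overline{\F_2}$. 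In particular $\sigma(x^{2h})$ is squarefree and every prime factor $P$ has $P(0)\neq 0$ (its roots are roots of unity), which is what makes the reciprocal operation harmless.

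For the reciprocal statement: if $P\mid\sigma(x^{2h})$, let $\zeta$ be a root of $P$, so $\zeta^{2h+1}=1$ and $\zeta\neq 1$. Then $\zeta^{-1}$ is also a $(2h+1)$-th root of unity $\neq 1$, hence a root of $\sigma(x^{2h})$, and $P^{*}$ (up to the irrelevant scalar, here trivial over $\F_2$) is precisely the minimal polynomial of $\zeta^{-1}$ — equivalently, $\sigma(x^{2h})$ is itself reciprocal up to a unit, because its root set is closed under inversion, so $P^{*}\mid\sigma(x^{2h})^{*}=\sigma(x^{2h})$ (checking the degree/leading-coefficient bookkeeping, which is trivial in characteristic $2$ since $\sigma(x^{2h})(0)=1$). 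Concretely one can also argue directly: $x^{2h}\,\sigma(x^{2h})(1/x) = x^{2h}(1+x^{-1}+\cdots+x^{-2h}) = 1+x+\cdots+x^{2h} = \sigma(x^{2h})$, so $\sigma(x^{2h})$ is literally its own reciprocal, and reciprocation sends a divisor to a divisor.

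For the $\overline{P}$ statement: applying $x\mapsto x+1$ to $\sigma(x^{2h})$ gives $\overline{\sigma(x^{2h})}=1+(x+1)+\cdots+(x+1)^{2h}=\sigma((x+1)^{2h})$, just by definition of $\sigma$ on a prime power. Since the substitution $x\mapsto x+1$ is a ring automorphism of $\F_2[x]$, it carries the factorization $\sigma(x^{2h})=\prod_i P_i$ to $\sigma((x+1)^{2h})=\prod_i\overline{P_i}$; hence $P\mid\sigma(x^{2h})$ forces $\overline{P}\mid\sigma((x+1)^{2h})$. This half is essentially a one-line observation once the defining identity is written down.

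I do not expect a genuine obstacle here; the only point demanding a little care is the reciprocal claim, where one must be sure that passing to the reciprocal of an irreducible factor does not introduce a spurious factor of $x$ or change the leading coefficient — this is handled by the remark that all roots of $\sigma(x^{2h})$ are $(2h+1)$-th roots of unity, hence nonzero, so $\deg P^{*}=\deg P$ and $P^{*}(0)=1$, and by the self-reciprocity identity displayed above. Everything else reduces to the two automorphism/anti-automorphism arguments on the root sets.
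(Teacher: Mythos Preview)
Your proposal is correct and follows essentially the same approach as the paper: the paper's proof simply notes that $(\sigma(x^{2h}))^* = \sigma(x^{2h})$, whence any irreducible factor $P$ has $P^*$ dividing $\sigma(x^{2h})$, and the $\overline{P}$ claim follows from the substitution $x\mapsto x+1$. Your cyclotomic and roots-of-unity discussion is more than is needed, but your direct self-reciprocity computation and the ring-automorphism argument for the bar map are exactly the paper's reasoning.
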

\begin{proof}
We remark that $(\sigma(x^{2h}))^* = \sigma(x^{2h})$. So, for any irreducible factor $U$ of $\displaystyle{\sigma(x^{2h})}$, $U^*$ also divides $\sigma(x^{2h})$. Our result follows.
\end{proof}
\begin{lemma} \label{bar-star-stable}
Let $B$ be an even non splitting perfect polynomial over $\F_2$ and $Q$ an odd prime divisor of $B$. Then:\\
i) there exists $h \in \N^*$ such that $x^{2h}$ or $(x+1)^{2h}$ divides $B$,\\
ii) $1+Q$ divides $B$ or $\sigma(Q^{2h})$ divides $B$, for some $h \in \N^*$.
\end{lemma}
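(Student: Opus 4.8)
The plan is to exploit the perfectness equation $\sigma(B)=B$ together with the multiplicativity of $\sigma$, reading it modulo appropriate irreducible factors. First I would write $B = x^{a}(x+1)^{b}\prod_{j} P_j^{e_j}$ with the $P_j$ odd primes (one of them being $Q$), noting that since $B$ is even, at least one of $a,b$ is positive; since $B$ is non-splitting, $\omega(B)\ge 3$. For part (i), the key observation is that in characteristic $2$ one has $\sigma(P^{e})=P^{e}$ forcing nothing, but $\sigma$ of an even-exponent power versus odd-exponent power behaves very differently: if $P^{e}\|B$ with $e$ odd, then $(1+P)\mid\sigma(P^{e})$, and since $1+x$ (for $P=x$) or a nontrivial factor divides it, one gets a parity/degree constraint. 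The cleanest route: if $x^{a}\|B$ then $\sigma(x^{a})\mid B$; if $a$ were odd, $\sigma(x^{a})=(1+x)\sigma(x^{a-1})$ would be divisible by $1+x$ only with multiplicity one, and chasing this through forces a contradiction with $B$ being perfect and non-splitting unless some even power $x^{2h}$ (or $(x+1)^{2h}$) actually occurs. I expect the slick way is: $B$ perfect even non-splitting implies $B$ is not of trivial type, and a short argument (perhaps already essentially in \cite{Canaday}) shows the exponent of $x$ or of $x+1$ must be even, giving (i).

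For part (ii), I would apply $\sigma$-multiplicativity to the factor $Q^{e}$ where $Q^{e}\|B$. Then $\sigma(Q^{e})\mid \sigma(B)=B$. Split on the parity of $e$: if $e$ is odd, then $1+Q\mid \sigma(Q^{e})$ (since $\sigma(Q^{e}) = (1+Q)(1+Q^{2}+\cdots)$ when $e$ is odd, because $1+Q$ divides $1+Q^{2k+1}$), and $1+Q\mid B$, which is the first alternative. If $e$ is even, say $e=2h$ with $h\in\N^{*}$ — and here I must first rule out $e=0$, which is immediate since $Q\mid B$ — then $\sigma(Q^{2h})\mid B$ directly, which is the second alternative. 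So the dichotomy in (ii) is exactly the parity dichotomy on the exponent of $Q$ in $B$, and the only real content is verifying the elementary identity that $1+Q$ divides $\sigma(Q^{e})$ precisely when $e$ is odd, which follows from $\sigma(Q^e)=1+Q+\cdots+Q^{e}$ and evaluating the geometric-type sum over $\F_2$: it is divisible by $1+Q$ iff $e+1$ is even iff $e$ is odd.

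I would then combine the two parts: part (i) supplies the even power of $x$ or $x+1$ needed elsewhere, and part (ii) is self-contained from the parity analysis of $Q$'s exponent. The main obstacle is part (i): showing that an even non-splitting perfect polynomial necessarily has an \emph{even} exponent on $x$ or on $x+1$. The naive worry is a perfect $B$ with both $a$ and $b$ odd; one must show this is impossible (or that it forces $B$ to split). I would handle this by looking at $\sigma(x^{a})\sigma((x+1)^{b})\mid B$ and counting the multiplicity of $x$ and $x+1$: if $a$ is odd then $x\nmid\sigma(x^{a})$ but $\sigma(x^{a})$ contributes odd primes to $B$; tracking that $1+x$ and $x$ appear on the right-hand side of $\sigma(B)=B$ with exactly multiplicities $a$ and $b$, while the left-hand side forces extra copies when $a,b$ are odd, yields the contradiction. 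This degree/multiplicity bookkeeping — essentially Canaday's classical argument — is the step I expect to require the most care, but it is routine once set up.
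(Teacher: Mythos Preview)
Your argument for (ii) is correct and matches the paper's in spirit: both split on the form of the exponent $e$ of $Q$ in $B$ and read off one alternative or the other from $\sigma(Q^{e})\mid\sigma(B)=B$. You split on the parity of $e$; the paper instead writes $e+1=2^{t}s$ with $s$ odd and splits on $s=1$ versus $s\ge 3$, invoking the identity $\sigma(Q^{2^{t}s-1})=(1+Q)^{2^{t}-1}\bigl[\sigma(Q^{s-1})\bigr]^{2^{t}}$. Your version is, if anything, the simpler packaging of the same idea.

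For (i) there is a genuine gap, stemming from a misreading of the statement. The assertion ``$x^{2h}\mid B$ for some $h\in\N^{*}$'' does \emph{not} say that the exponent $a$ of $x$ in $B$ is even; it says only that $a\ge 2$. Thus the case you must exclude is $a,b\in\{0,1\}$, not ``$a$ and $b$ both odd'' --- for instance $a=b=3$ already gives $x^{2}\mid B$ and there is nothing to prove. Your proposed multiplicity bookkeeping is therefore aimed at the wrong target and, as outlined, would not close the argument.

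The paper's route for (i) is different from what you sketch. One writes $a+1=2^{t_{1}}s_{1}$ and $b+1=2^{t_{2}}s_{2}$ with $s_{1},s_{2}$ odd, and uses the factorization $\sigma(x^{a})=(x+1)^{2^{t_{1}}-1}\bigl[\sigma(x^{s_{1}-1})\bigr]^{2^{t_{1}}}$ (and its $x\mapsto x+1$ analogue). If $s_{1}=s_{2}=1$ then $\sigma\bigl(x^{a}(x+1)^{b}\bigr)=x^{b}(x+1)^{a}$ contributes no odd prime to $\sigma(B)$, and the paper argues this is incompatible with $B$ being non-splitting; hence $s_{1}\ge 3$ or $s_{2}\ge 3$, and one takes $2h=s_{1}-1$ or $2h=s_{2}-1$. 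The quantity that governs the argument is the odd part of $a+1$ (resp.\ $b+1$), not the parity of $a$ itself.
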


\begin{proof}
i): $B$ does not split, so the exponent of $x$ (resp. of $x+1$) in $B$ is of the form $2^{t_1}s_1-1$ (resp. $2^{t_2}s_2-1$), where $s_1, s_2$ are odd, $s_1 \geq 3$ or $s_2 \geq 3$, and $\sigma(x^{s_1-1})$ or $\sigma((x+1)^{s_2-1})$ divides $\sigma(B)=B$. Take then: $2h=s_1-1$ or $s_2-1$.\\
ii): The exponent of $Q$ in $B$ is of the form $2^ts-1$, with $s$ odd and $t \geq 1$. If $s=1$, then $1+Q$ divides $(1+Q)^{2^t-1} = \sigma(Q^{2^t-1})$ which in turn, divides $\sigma(B)=B$. If $s\geq 3$, then $\sigma(Q^{s-1})$ divides $\sigma(Q^{2^ts-1})$ and $B$. Thus, take $h=\displaystyle{\frac{s-1}{2}}$.
\end{proof}

\begin{definition} \label{defadmissible}
\emph{A family ${\cal{G}}$ of odd irreducible polynomials is} admissible \emph{if it satisfies at least i), ii) or iii):\\
i) For any $T \in {\cal{G}}$,
$T^* \in {\cal{G}}$ or $\overline{T} \in {\cal{G}}$.\\
ii) There exists $h \in \N^*$ such that $\sigma(x^{2h})$ or $\sigma((x+1)^{2h})$  factors in ${\cal{G}}$.\\
iii) For any $T \in {\cal{G}}$, $1+T$ or $\sigma(T^{2h})$ factors in  ${\cal{G}} \cup \{x,x+1\}$, for some $h \in \N^*$.}
\end{definition}

\begin{coroll-def} \label{oddadmissible}~\\
The set of odd prime divisor(s) of any even non-splitting perfect polynomial $A$ is admissible, called {\tt{admissible family for $A$}}.
\end{coroll-def}
By direct computations, we give
\begin{lemma} \label{allirreduc}
The polynomials $M_j$'s and $S_k$'s defined at the beginning of Section \ref{intro} are all irreducible. Moreover, each $M_j$ is a Mersenne one.
\end{lemma}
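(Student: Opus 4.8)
The plan is to treat Lemma~\ref{allirreduc} as a finite verification, after first cutting down the bookkeeping by exploiting the symmetry $Q \mapsto \overline{Q}$. Replacing $x$ by $x+1$ is a ring automorphism of $\F_2[x]$, so $\overline{Q}$ is irreducible precisely when $Q$ is; moreover the identity $\overline{Q^{\langle a,b,c\rangle}} = \overline{Q}^{\langle b,a,c\rangle}$ recorded in Section~\ref{intro} shows that $\overline{\,\cdot\,}$ carries $1+x^a(x+1)^b$ to $1+x^b(x+1)^a$, hence preserves the property of being Mersenne. It therefore suffices to prove both assertions for the seven representatives $M_1, M_2, M_4, M_6, M_7, M_8, M_{12}$ (the remaining six $M_j$ being $\overline{\,\cdot\,}$ of one of these) and for the fifteen $S_k$.

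For the Mersenne assertion I would simply display the exponents: using $(x+1)^{2^k} = x^{2^k}+1$ in $\F_2[x]$ one checks immediately that $M_1 = 1+x(x+1)$, $M_2 = 1+x(x+1)^2$, $M_4 = 1+x(x+1)^3$, $M_6 = 1+x^3(x+1)^2$, $M_7 = 1+x^3(x+1)^4$, $M_8 = 1+x^6(x+1)$ and $M_{12} = 1+x(x+1)^8$, so (with the reduction above) every $M_j$ has the form $1+x^a(x+1)^b$ with $a,b \in \N^*$.

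For irreducibility, observe that every polynomial on the list has degree at most $12$ (the value $12$ being reached by $S_3$, $S_6$ and $S_9$). A polynomial $P \in \F_2[x]$ of degree $d$ is irreducible if and only if it has no irreducible factor of degree $\le d/2$, equivalently if and only if $P \mid x^{2^d}+x$ while $\gcd\bigl(P,\, x^{2^{d/q}}+x\bigr)=1$ for every prime $q \mid d$; I would run this test on the seven $M_j$ representatives and the fifteen $S_k$. Two preliminary remarks trim the work: each polynomial on the list has constant term $1$ and value $1$ at $x=1$ (for $1+x^a(x+1)^b$ and $1+x^a(x+1)^bM_1^c$ this is clear since $a,b \ge 1$), which rules out factors of degree $1$; and since $M_1$ divides $x^a(x+1)^bM_1^c$, every $S_k$ satisfies $S_k \equiv 1 \pmod{M_1}$, which rules out the unique irreducible quadratic. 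What remains is to exclude factors of degrees $2$ through $6$ for the polynomials of degree $\ge 5$, a short direct computation (the kind of light computer check mentioned in Section~\ref{intro}).

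There is no conceptual obstacle here: the only genuine ``difficulty'' is organizational, namely keeping the case analysis finite and transparent, which is why the $\overline{\,\cdot\,}$-reduction and the constant-term / value-at-$1$ observations are worth stating up front. The one place that needs a little care is the degree-$12$ polynomials $S_3, S_6, S_9$, for which the absence of irreducible factors of each degree $2,3,4,5,6$ must actually be checked; the congruence $S_k \equiv 1 \pmod{M_1}$ disposes of degree $2$, and the $\gcd$ criterion above disposes of the remaining degrees uniformly.
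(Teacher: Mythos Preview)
Your proposal is correct and takes the same approach as the paper, which simply records the lemma as holding ``by direct computations'' with no further argument. Your write-up is in fact more detailed than the paper's: the reduction via $Q\mapsto\overline{Q}$, the explicit Mersenne exponents for the seven $M_j$ representatives, and the Rabin-type irreducibility test are all sound elaborations of what the paper leaves implicit.
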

\begin{remarks}~\\
\emph{i) An admissible family is not necessarily stable both under $Q \mapsto \overline{Q}$ and $Q \mapsto Q^*$.
For example, ${\cal{G}} = \{M_1, \ldots, M_5\}$ is admissible giving the first nine perfect polynomials $T_1, \ldots, T_9$. However, ${M_5}^* = S_1 \not\in {\cal{G}}$.}\\
\emph{ii) The converse of Corollary-Definition \ref{oddadmissible} is false: $\{M_2\}$ is admissible ({\scriptsize{iii) satisfied}}), but there exists no perfect polynomial of the form $x^a(x+1)^b{M_2}^c$.}
\end{remarks}

\begin{examples} \label{famillexemple}
$$\begin{array}{|l|l|}
\hline
\text{Admissible family}&\text{Associated even perfect(s)}\\
\hline
\ \emptyset & \text{Trivial ones}\\
\{M_1\}&T_1, T_2\\
\{M_2\}&\text{No one}\\
\{M_4\}&T_3\\
\{M_5\}&T_4\\
\{M_2,M_3\}&T_6, T_7\\
\{M_4,M_5\}&T_5\\
\{M_2,M_3,M_4\}&T_8\\
\{M_2,M_3,M_5\}&T_9\\
\{M_1,\ldots,M_5\}&T_1,\ldots,T_9\\
\{M_1,S_1\}&T_{10}, T_{11}\\
\hline
\end{array}$$
\emph{$\{M_1\}$, $\{M_4\}$ satisfy i), ii) and iii).
$\{M_5\}$ satisfies ii) and iii), but not i).}
\end{examples}

\begin{proposition} \label{Mersenneadmissible}
The set of all Mersenne primes is admissible and admits $T_1, \ldots, T_9$ as associated even perfects.
\end{proposition}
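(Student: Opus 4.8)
The proposition has two halves, and I would treat them separately. The first --- that the set $\mathcal{M}$ of all Mersenne primes is admissible --- I would obtain from condition~i) of Definition~\ref{defadmissible}. Given any $T = 1 + x^a(x+1)^b \in \mathcal{M}$ with $a, b \in \N^*$, the substitution $x \mapsto x+1$ is a ring automorphism of $\F_2[x]$, so $\overline{T} = 1 + x^b(x+1)^a$ is irreducible and again has the defining shape of a Mersenne prime (exponents $b, a \in \N^*$); hence $\overline{T} \in \mathcal{M}$. Thus $\mathcal{M}$ is stable under $Q \mapsto \overline{Q}$, which is exactly condition~i), so $\mathcal{M}$ is admissible. (Condition~ii) would serve equally well, since $\sigma(x^2) = 1 + x + x^2 = 1 + x(x+1) = M_1 \in \mathcal{M}$.)

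For the second half I would first unwind ``associated even perfects'': these are the even non-splitting perfect polynomials all of whose odd prime divisors lie in $\mathcal{M}$, equivalently the perfect polynomials of the form $x^a(x+1)^b \prod_j P_j^{h_j}$ with every $P_j$ a Mersenne prime and $a, b, h_j \in \N$ (splitting perfects, which are the trivial $x^{2^n-1}(x+1)^{2^n-1}$, carry no odd prime divisors and do not occur here). By \cite[Theorem 1.1]{Gall-Rahav16}, recalled in Section~\ref{intro}, the only such polynomials are $T_1, \ldots, T_9$. For the reverse inclusion I would read off the factorizations displayed in Section~\ref{intro}, using $\overline{M_2} = M_3$ and $\overline{M_4} = M_5$, to check that every odd prime divisor of each $T_j$ with $1 \le j \le 9$ lies in $\{M_1, \ldots, M_5\}$ --- all Mersenne primes by Lemma~\ref{allirreduc} --- and that each $T_j$ is an even perfect polynomial (recalled in Section~\ref{intro}) with $\omega(T_j) \ge 3$, hence non-splitting. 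Combining the two inclusions yields the claim.

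The closure computation in the first paragraph is immediate, and the reverse inclusion in the second paragraph is just inspection of the eleven known factorizations, so the entire weight of the statement rests on the classification \cite[Theorem 1.1]{Gall-Rahav16}, which I would quote as a black box. The only point I would be careful to keep straight is conceptual rather than technical: admissibility of $\mathcal{M}$ on its own produces no perfect polynomial at all (compare the remark, just after Lemma~\ref{allirreduc}, that $\{M_2\}$ is admissible although no perfect polynomial has the form $x^a(x+1)^b {M_2}^{c}$), so the ``admits $T_1, \ldots, T_9$'' clause is genuinely the content of the cited theorem, not a formal byproduct of the admissibility argument.
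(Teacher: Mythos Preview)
Your proof is correct and follows essentially the same approach as the paper: the paper also verifies condition~i) of Definition~\ref{defadmissible} by noting that $\overline{M}$ is Mersenne whenever $M$ is, and then cites \cite[Theorem 1.1]{Gall-Rahav16} for the classification of associated even perfects. Your version is simply more explicit (spelling out the reverse inclusion and the aside about condition~ii)), but there is no substantive difference.
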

\begin{proof}
The part i) of Definition \ref{defadmissible} is satisfied: if $M$ is a Mersenne prime, then $\overline{M}$ is also a Mersenne prime. See then \cite[Theorem 1.1]{Gall-Rahav16}.
\end{proof}

\subsection{The family ${\cal{F}}$} \label{thefamily}
We sketch how we choose the family ${\cal{F}} = {\cal{F}}_1 \cup {\cal{F}}_2$ (see \cite{Gall-Rahav15}).
We begin with the \emph{reciprocity stability} in order to get the first members:
$$M_1, \ldots, M_4, M_{12}, M_{13}, S_1, S_{2}, S_{3}, \ldots$$
After that, for $S  \in \{x, x+1\}$, for $S$ Mersenne prime or for $S$ of the form ${M_1}^{\langle{a,b,c}\rangle}$, we search all prime divisors of some $\sigma(S^{2h})$, $h \in \N^*$. By the way, we are able to find all possible exponents $m$, with $P^m \mid \mid \sigma(A)$. This is the core of the method,
since we have at this step, a finite number of possibilities to try with the computer.

In this section, we suppose that $Q=M_1$ and that $Q^{\langle{a,b,c}\rangle}$ is irreducible. So,
$(Q^{\langle{a,b,c}\rangle})^* = x^{a+b+2c}+(x+1)^b(x^2+x+1)^c$, with $\gcd(a,b,c) =1$.\\
Since $Q^{\langle{a,b,c}\rangle} \in {\cal{F}}$ implies that $\overline{Q^{\langle{a,b,c}\rangle}} \in {\cal{F}}$, we also require that $(Q^{\langle{a,b,c}\rangle})^* \in {\cal{F}}$, in order to get a bigger admissible family. Nevertheless, we are limited in our choice because of the difficulty to prove polynomial irreducibility.\\
So, we consider three cases:
$(Q^{\langle{a,b,c}\rangle})^*$ is Mersenne, $(Q^{\langle{a,b,c}\rangle})^* = Q^{\langle{a,b,c}\rangle}$ and $(Q^{\langle{a,b,c}\rangle})^*  = Q^{\langle{d,e,f}\rangle} \not= Q^{\langle{a,b,c}\rangle}$.\\
The first Mersenne prime members of ${\cal{F}}$ are obtained from Lemma \ref{Merstar=Mers}, whereas the other members, from Section 3-4 in \cite{Gall-Rahav15}. More precisely, in \cite{Gall-Rahav15}: \\
- Section 3-4-1 gives $S_1, S_{10}, S_{14}, S_{15}$, with $(S_1)^*=M_5$, $(S_{10})^* = M_7$, $(S_{14})^* = M_6$, and $(S_{15})^* = M_8$,\\
- Proposition 3-15 gives $S_3= (S_3)^*$ and $S_4 = (S_4)^*$,\\
- from Section 3-4-3, we get $S_2, S_{5}, S_{6}, S_9$, with $(S_2)^*=S_5$ and $(S_{6})^* = S_9$,\\
- we take $S_7$ and $S_8$ because $\sigma(M_1^4) = S_8$ and $\sigma(S_2^2) = S_1S_7$,\\
- we finally add $S_{11} = \overline{S_{14}}$, $S_{13} = \overline{S_{10}}$ and $S_{12} = \overline{S_{15}}$.

\begin{lemma} {\rm{(\cite[p. 728-729]{Canaday})}} \label{Merstar=Mers}~\\
Let $M$ be a Mersenne prime such that $M^*$ is also Mersenne. Then\\
i) $M \in \{M_1, M_4\}$ if $M=M^*$.\\
ii)  $M \in \{M_2, M_3,M_{12}, M_{13}\}$ if $M \not= M^*$.
\end{lemma}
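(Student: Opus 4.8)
Write $M$ in its unique form $M = 1 + x^a(x+1)^b$ with $a,b \in \N^*$. Then $\gcd(a,b)=1$ --- otherwise $1 + x^{a/g}(x+1)^{b/g}$, being nonconstant, would be a proper divisor of $M$ since $1+y \mid 1+y^g$ in $\F_2[y]$ --- and $\deg M = a+b =: n \ge 2$. A direct substitution gives $M^*(x) = x^n M(1/x) = x^n + (x+1)^b$; as the reciprocal of an irreducible polynomial with nonzero constant term is again irreducible, $M^*$ is irreducible with $M^*(0)=M^*(1)=1$, so $M^*$ is Mersenne if and only if $M^*-1 = x^n + (x+1)^b + 1$ has the form $x^c(x+1)^d$ with $c,d\in\N^*$, in which case $c+d=n$ and $\gcd(c,d)=1$. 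Applying this to $(M^*)^* = M$ also gives $x^n + (x+1)^d + 1 = x^a(x+1)^b$. Throughout I use the elementary facts $v_x\big((x+1)^m+1\big) = 2^{v_2(m)}$ and $v_{x+1}\big(x^m+1\big) = 2^{v_2(m)}$ (for $m$ odd the relevant lowest-order term has coefficient $\binom m1 = 1$; for general $m$ one squares).

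\medskip
\emph{Case $M = M^*$.} The first identity becomes $x^n + (x+1)^b + 1 = x^a(x+1)^b$. Reading off $x$-adic valuations and using $v_x\big((x+1)^b+1\big) = 2^{v_2(b)} \le b < n$ gives $a = 2^{v_2(b)}$, hence $a = 1$ and $b$ odd (by $\gcd(a,b)=1$). Then $M = M^*$ rearranges to $(x+1)^{b+1} = x^{b+1}+1$, which holds in $\F_2[x]$ precisely when $b+1$ is a power of $2$; so $b = 2^k-1$ and, multiplying by $x+1$, $M = (x^{2^k+1}+1)/(x+1) = 1+x+\cdots+x^{2^k}$. Its roots are the $(2^k+1)$-th roots of unity other than $1$, so $M$ is irreducible iff $\operatorname{ord}_{2^k+1}(2)=2^k$; but for $k\ge 3$ we have $2^k\equiv -1\pmod{2^k+1}$, forcing $\operatorname{ord}_{2^k+1}(2)\mid 2k<2^k$, so $M$ is reducible. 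Only $k=1,2$ survive, giving $M\in\{M_1,M_4\}$.

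\medskip
\emph{Case $M \ne M^*$.} Comparing $x$-adic valuations in $x^n+(x+1)^b+1 = x^c(x+1)^d$ gives $c = 2^{v_2(b)}$, and symmetrically $a = 2^{v_2(d)}$; in particular $a$ and $c$ are powers of $2$. Now analyse parities: if $b$ and $d$ are both odd then $c=a=1$ and, since $n=1+b=1+d$, also $b=d$, so $M^* = 1+x(x+1)^b = M$, excluded; if $b$ is even then $c=2^{v_2(b)}$ is even, hence $d$ is odd by $\gcd(c,d)=1$, hence $a=2^{v_2(d)}=1$, and $d$ even is the mirror situation. So, up to the symmetry $M\leftrightarrow M^*$ (which preserves the hypotheses), we may assume $b$ even, $a=1$, $d$ odd. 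Comparing $(x+1)$-adic valuations in $x^n+(x+1)^b+1=x^c(x+1)^d$ --- where $(x+1)^b$ contributes valuation $b\ge 2$ while $x^n+1$ contributes $v_{x+1}(x^n+1)=2^{v_2(n)}=1$ since $n=b+1$ is odd --- yields $d=1$, whence $n=b+1=c+1$ and $b=c=2^{v_2(b)}$ is a power of $2$. Therefore $M = 1 + x(x+1)^{2^e} = x^{2^e+1}+x+1$ for some $e\ge 1$, or $M$ is its reciprocal $x^{2^e+1}+x^{2^e}+1$.

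\medskip
It remains to decide the irreducibility of $M = x^{2^e+1}+x+1$, which is the genuinely substantive step. Let $\beta$ be a root; then $\beta\neq 0$ and $\beta^{2^e} = (\beta+1)/\beta = T(\beta)$, where $T(z)=1+1/z$. The Möbius transformation $T$ has order $3$ (its associated matrix cubes to the identity over $\F_2$), and since $\beta^{2^e}$ is a Galois conjugate of $\beta$ it is again a root of $M$ and obeys the same relation; iterating gives $\beta^{2^{3e}} = T^3(\beta) = \beta$, so $\deg_{\F_2}(\beta)\mid 3e$. If $M$ were irreducible, then $2^e+1 = \deg M$ would divide $3e$, which fails for $e=2$ and for all $e\ge 4$ (where $2^e+1>3e$), leaving $e=1$ ($M=M_2$, reciprocal $M_3$) and $e=3$ ($M=M_{12}$, reciprocal $M_{13}$). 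Conversely $M_2,M_3,M_{12},M_{13}$ are irreducible by Lemma~\ref{allirreduc}, with $M_2^*=M_3$ and $M_{12}^*=M_{13}$, so all four indeed satisfy the hypothesis of (ii). The main obstacle is thus not the (mildly tedious) valuation bookkeeping but closing off the two infinite one-parameter families; the Frobenius/order-$3$ argument --- and, for (i), the computation of $\operatorname{ord}_{2^k+1}(2)$ --- does this cleanly, and the whole argument recovers Canaday's classification.
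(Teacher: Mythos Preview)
The paper gives no proof of this lemma at all --- it is simply quoted from Canaday \cite[p.~728--729]{Canaday} --- so there is nothing in the present paper to compare your argument against. That said, your proof is correct and self-contained: the valuation computations pinning down $a=2^{v_2(b)}$, $c=2^{v_2(b)}$, $a=2^{v_2(d)}$ are valid (the key inequality $2^{v_2(b)}\le b<n$ guarantees the minimum is attained), the parity dichotomy is exhaustive, and the two ``closing'' arguments are sound. In case~(i) you correctly reduce to $M=\sigma(x^{2^k})$ and use $2^{2k}\equiv 1\pmod{2^k+1}$ to force $\operatorname{ord}_{2^k+1}(2)\mid 2k<2^k$ for $k\ge 3$; in case~(ii) the order-$3$ M\"obius trick $\beta^{2^e}=1+1/\beta$ cleanly yields $\beta^{2^{3e}}=\beta$, hence $(2^e+1)\mid 3e$, leaving only $e\in\{1,3\}$. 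Both families are genuinely infinite a priori, and your Frobenius/order arguments dispose of them without appeal to external irreducibility tables (beyond the final positive checks, which the paper's Lemma~\ref{allirreduc} supplies). This is essentially Canaday's own route, reconstructed independently.
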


\begin{corollary} \label{Fisadmissible}
The family ${\cal{F}}$ is admissible.
\end{corollary}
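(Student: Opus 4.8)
The plan is to verify that $\mathcal{F}$ satisfies condition i) of Definition \ref{defadmissible}; in fact I will show the uniform statement that $\overline{T} \in \mathcal{F}$ for every $T \in \mathcal{F}$, which gives i) with the second alternative in every case. First I would invoke Lemma \ref{allirreduc}: it tells us that the thirteen $M_j$'s and the fifteen $S_k$'s are irreducible, and since each of them has degree $\geq 2$ it has no linear factor, hence is odd. So $\mathcal{F} = \mathcal{F}_1 \cup \mathcal{F}_2$ is genuinely a family of odd irreducible polynomials, to which Definition \ref{defadmissible} applies.

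Next I would treat $\mathcal{F}_1 = \{M_1, \ldots, M_{13}\}$. These polynomials were listed precisely so that $Q \mapsto \overline{Q}$ permutes them: by definition $M_3 = \overline{M_2}$, $M_5 = \overline{M_4}$, $M_9 = \overline{M_6}$, $M_{10} = \overline{M_7}$, $M_{11} = \overline{M_8}$ and $M_{13} = \overline{M_{12}}$, while $\overline{M_1} = 1 + (x+1) + (x+1)^2 = 1 + (x+1) + (x^2+1) = 1 + x + x^2 = M_1$. Applying $\overline{\overline{Q}} = Q$ to the six displayed equalities then yields $\overline{M_j} \in \mathcal{F}_1 \subseteq \mathcal{F}$ for all $j \in \{1, \ldots, 13\}$.

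Then I would treat $\mathcal{F}_2 = \{S_1, \ldots, S_{15}\}$. Each $S_k$ has the form $M_1^{\langle a,b,c\rangle} = 1 + x^a(x+1)^b M_1^c$, and by the identity recorded at the beginning of Section \ref{intro} together with $\overline{M_1} = M_1$ we get $\overline{M_1^{\langle a,b,c\rangle}} = \overline{M_1}^{\langle b,a,c\rangle} = M_1^{\langle b,a,c\rangle}$, i.e. passing to $\overline{\,\cdot\,}$ just swaps the first two exponents. So it remains to match each triple with its swap inside the list: the triples with $a = b$, namely those of $S_1 = M_1^{\langle 1,1,1\rangle}$, $S_2 = M_1^{\langle 2,2,1\rangle}$, $S_7 = M_1^{\langle 1,1,3\rangle}$, $S_8 = M_1^{\langle 3,3,1\rangle}$ and $S_9 = M_1^{\langle 1,1,5\rangle}$, are fixed by $\overline{\,\cdot\,}$, and the remaining ten split into the five pairs $\{S_3, S_6\}$, $\{S_4, S_5\}$, $\{S_{10}, S_{13}\}$, $\{S_{11}, S_{14}\}$, $\{S_{12}, S_{15}\}$ — which is exactly how $S_{11}, S_{12}, S_{13}$ were introduced ($S_{11} = \overline{S_{14}}$, $S_{12} = \overline{S_{15}}$, $S_{13} = \overline{S_{10}}$), and is readily checked for the others. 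Hence $\overline{S_k} \in \mathcal{F}_2 \subseteq \mathcal{F}$ for all $k$.

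Combining the last two steps, $\overline{T} \in \mathcal{F}$ for every $T \in \mathcal{F}$, so $\mathcal{F}$ satisfies condition i) of Definition \ref{defadmissible} and is admissible. There is no serious obstacle: the only points requiring care are the reduction $\overline{M_1} = M_1$ (so that the general relation $\overline{Q^{\langle a,b,c\rangle}} = \overline{Q}^{\langle b,a,c\rangle}$ collapses to a plain transposition of the first two exponents) and the purely clerical task of reading the pairings off the definitions of the $M_j$'s and $S_k$'s, both of which are immediate.
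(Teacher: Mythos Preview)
Your proof is correct and follows exactly the same route as the paper: the paper's proof is the single line ``The condition i) in Definition \ref{defadmissible} is obviously satisfied,'' and you have simply written out in full the verification that $\overline{T}\in\mathcal{F}$ for every $T\in\mathcal{F}$ (which the paper also records, without proof, as Remarks \ref{remarks3}-ii)).
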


\begin{proof}
The condition i) in Definition \ref{defadmissible} is obviously satisfied.
\end{proof}

\begin{remarks} \label{remarks3}~\\
\emph{i) By direct computations, the sum $\displaystyle{\sum_{D \in {\cal{F}}} \deg(D)}$ equals $184$.}\\
\emph{ii) For any $T \in {\cal{F}}$, one has: $\overline{T} \in {\cal{F}}$.\\
iii)
$\text{$T^* \not\in {\cal{F}}$ if $T \in \{M_9,M_{10}, M_{11}, S_7, S_8, S_{11},S_{12},S_{13}\}$}.$\\
iv) ${\cal{F}}$ contains all the families described in Examples \ref{famillexemple} and some primes of the form ${M_1}^{\langle{a,b,c}\rangle}$, like $S_1$.}
\end{remarks}
We take:
\begin{equation} \label{formofA}
\displaystyle{A = x^a(x+1)^b \prod_{i=1}^{13} {M_i}^{c_i} \cdot \prod_{j=1}^{15} {S_j}^{d_j}=x^a(x+1)^b \ A_1},
\end{equation}
where $a,b, c_i, d_j \in \N, \ a,b \geq 1$ and $A_1 \not= 1$ (so that $\omega(A) \leq  30$).\\
We also put:
\begin{equation} \label{lesexposants}
a =2^nu-1,\ b=2^mv-1,\ c_i = 2^{n_i}u_i-1,\ d_j = 2^{m_j}v_j-1, i \leq 13, j \leq 15,
\end{equation}
for some odd integers $u,v, u_i, v_j$, and for some $n,m, n_i, m_j \in \N$.

\subsection{Prime divisors of $\sigma(A)$ and their exponents} \label{lesdiviseursdesigmA}

In order to compare $A$ and $\sigma(A)$, we give all prime divisors of $\sigma(A)$ with their exponents. With the same notations as in (\ref{formofA}) and in (\ref{lesexposants}), we may write:
\begin{equation} \label{sigmaAdetails}
\begin{array}{l}
\displaystyle{\sigma(A) = \sigma(x^a) \sigma((x+1)^b)) \prod_{i=1}^{13} \sigma({M_i}^{c_i}) \prod_{j=1}^{15} \sigma({S_j}^{d_j})},\\
\sigma(x^a) = (x+1)^{2^n-1} \cdot [\sigma(x^{u-1})]^{2^n},\
\sigma((x+1)^b) = x^{2^m-1} \cdot [\sigma((x+1)^{u-1})]^{2^m},\\
\sigma({M_i}^{c_i}) = (1+M_i)^{2^{n_i}-1} \cdot [\sigma({M_i}^{u_i-1})]^{2^{n_i}},\\
\sigma({S_j}^{d_j}) = (1+S_j)^{2^{m_j}-1} \cdot [\sigma({S_j}^{v_j-1})]^{2^{m_j}}.
\end{array}
\end{equation}
We must find all $h \in \N^*$ such that $\sigma(S^{2h})$ factors in ${\cal{F}}$, for $S \in \{x,x+1\} \cup {\cal{F}}$. Assuming we obtained these values of $h$, we put:
\begin{equation}
\label{expressionsigmA}
\displaystyle{\sigma(A) = x^{\alpha} (x+1)^{\beta} \prod_{i=1}^{13} {M_i}^{\gamma_i} \prod_{j=1}^{15} {S_j}^{\delta_j}},
\text{ where $\alpha, \beta, \gamma_i,\ \delta_j \in \N$}.
\end{equation}

\begin{lemma}
\label{divisorofTT'}
For any $S \in {\cal{F}}_2$ and $h \in \N^*$, $M_1$ does not divide $\sigma({S}^{2h})$.
\end{lemma}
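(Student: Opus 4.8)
The plan is to understand the structure of $\sigma(S^{2h})$ for each $S \in \mathcal{F}_2$ and check that $M_1 = 1+x+x^2$ never appears as a factor. The key observation is that every $S \in \mathcal{F}_2$ has the form $S = 1 + x^a(x+1)^b M_1^c$ for specific small exponents $a,b,c$, so $S \equiv 1 \pmod{M_1}$ would be the naive hope — but $M_1$ itself may still occur as a divisor of $\sigma(S^{2h}) = 1 + S + S^2 + \cdots + S^{2h}$, so one cannot argue purely modulo $M_1$. Instead I would evaluate $\sigma(S^{2h})$ at the two roots of $M_1$ in $\mathbb{F}_4$. Let $\zeta \in \mathbb{F}_4$ satisfy $\zeta^2 + \zeta + 1 = 0$. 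Then $M_1 \mid \sigma(S^{2h})$ if and only if $\sigma(S^{2h})(\zeta) = 0$, i.e. $\sum_{k=0}^{2h} S(\zeta)^k = 0$ in $\mathbb{F}_4$.

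First I would compute $s := S(\zeta) \in \mathbb{F}_4$ for each of the fifteen polynomials $S_1, \ldots, S_{15}$; this is a routine finite computation since $x(\zeta), (x+1)(\zeta), M_1(\zeta) = 0$ are all known, and in fact $M_1(\zeta)=0$ forces $S_j(\zeta) = 1 + \zeta^{a_j}(\zeta+1)^{b_j} \cdot 0^{c_j}$. Wait — each $S_j = M_1^{\langle a,b,c\rangle} = 1 + x^a(x+1)^b M_1^c$ with $c \geq 1$, so $S_j(\zeta) = 1$ for every $j$. Hence $s = 1$ uniformly, and $\sigma(S^{2h})(\zeta) = \sum_{k=0}^{2h} 1 = 2h+1 = 1 \neq 0$ in $\mathbb{F}_2 \subseteq \mathbb{F}_4$ (since $2h+1$ is odd). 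Therefore $M_1 \nmid \sigma(S^{2h})$ for all $h \in \mathbb{N}^*$ and all $S \in \mathcal{F}_2$. The same evaluation at the conjugate root $\zeta^2$ gives the identical conclusion, so $M_1 = (x-\zeta)(x-\zeta^2)$ genuinely does not divide $\sigma(S^{2h})$.

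The only subtlety — and the step I would state carefully rather than wave at — is the verification that every member of $\mathcal{F}_2$ really is of the form ${M_1}^{\langle a,b,c\rangle}$ with exponent $c \geq 1$ on $M_1$; this is immediate from the defining list of $S_1, \ldots, S_{15}$ at the start of Section~\ref{intro}, where each $S_j$ is written explicitly as ${M_1}^{\langle a_j, b_j, c_j \rangle}$ with $c_j \in \{1,2,3,4,5\}$. Since $c_j \geq 1$, the term $M_1^{c_j}$ vanishes at $\zeta$, giving $S_j(\zeta) = 1$ with no case analysis. I do not expect a real obstacle here: the whole argument reduces to the remark that an odd geometric sum of $1$'s equals $1$ in characteristic $2$, once one notices $S_j \equiv 1 \pmod{M_1}$. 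If one preferred to avoid evaluation in $\mathbb{F}_4$ altogether, the same thing reads: $S_j \equiv 1 \pmod{M_1}$, hence $\sigma(S_j^{2h}) = \sum_{k=0}^{2h} S_j^k \equiv 2h+1 \equiv 1 \pmod{M_1}$, so $M_1 \nmid \sigma(S_j^{2h})$.
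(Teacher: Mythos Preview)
Your proposal is correct and is essentially the paper's own argument: write $S = 1 + x^a(x+1)^b M_1^c$ with $c \geq 1$, observe that $S(\alpha) = 1$ for any root $\alpha$ of $M_1$, and conclude that $\sigma(S^{2h})(\alpha) = 2h+1 = 1 \neq 0$. Your closing remark that this is just the congruence $S \equiv 1 \pmod{M_1}$, hence $\sigma(S^{2h}) \equiv 1 \pmod{M_1}$, is exactly the point, so your earlier hesitation that ``one cannot argue purely modulo $M_1$'' was unwarranted.
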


\begin{proof}
Keep in mind that any element of ${\cal{F}}_2$ is irreducible. Put $S =1+x^c (x+1)^d {M_1}^e$. If $\alpha$ is a root of $M_1$, then $1 =1+0 = 1+\alpha^c (\alpha+1)^d (M_1(\alpha))^e= S(\alpha)$ and so $(\sigma({S}^{2h}))(\alpha) = 1+S(\alpha) + \cdots + (S(\alpha))^{2h} =  1 \not= 0$.
\end{proof}

\begin{lemma}
\label{S2h-squarefree}
For any $h \in \N^*$ and for any $S \in \{x,x+1\} \cup {\cal{F}}$, $\sigma(S^{2h})$ is odd and square-free.
\end{lemma}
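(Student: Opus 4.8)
The statement splits into two independent claims: that $\sigma(S^{2h})$ is \emph{odd} (no factor $x$ or $x+1$) and that it is \emph{square-free}. For oddness, I would evaluate $\sigma(S^{2h})$ at $x=0$ and at $x=1$. Since $\sigma(S^{2h}) = 1 + S + S^2 + \cdots + S^{2h}$ is a sum of $2h+1$ terms (an odd number), we get $(\sigma(S^{2h}))(0) = \sum_{k=0}^{2h} S(0)^k$, which over $\F_2$ equals $1$ whenever $S(0)\in\{0,1\}$ — and $S(0)$ is always $0$ or $1$ since we work in $\F_2[x]$. Concretely: if $S(0)=0$ the sum is $1$; if $S(0)=1$ it is a sum of an odd number of $1$'s, hence $1$. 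So $x \nmid \sigma(S^{2h})$, and symmetrically (evaluating at $x=1$, equivalently passing to $\overline{S}$ and using that $\mathcal F\cup\{x,x+1\}$ is stable under the bar operation by Remark \ref{remarks3} ii) $x+1\nmid\sigma(S^{2h})$. That disposes of oddness cleanly.

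For square-freeness, the standard tool is the derivative: a polynomial $G$ over $\F_2[x]$ is square-free iff $\gcd(G,G')=1$. Writing $G=\sigma(S^{2h})=\sum_{k=0}^{2h}S^k$, one computes $G' = S' \cdot \sum_{k=1}^{2h} k S^{k-1}$; over $\F_2$ only odd $k$ survive, so $G' = S'\sum_{k \text{ odd}} S^{k-1} = S' \cdot (1 + S^2 + S^4 + \cdots + S^{2h-2}) = S'\cdot\big(\sigma(S^{h-1})\big)^2$ when $h$ is... actually more carefully $\sum_{\text{odd }k\le 2h} S^{k-1} = \sum_{\ell=0}^{h-1} S^{2\ell} = \big(\sum_{\ell=0}^{h-1}S^{\ell}\big)^2 = (\sigma(S^{h-1}))^2$. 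Thus $G' = S'\cdot(\sigma(S^{h-1}))^2$. Now suppose an irreducible $P$ divides $\gcd(G,G')$. If $P \mid \sigma(S^{h-1})$, then since $\sigma(S^{2h}) = \sigma(S^{h-1}) + S^h\cdot\sigma(S^{h-1}) + \cdots$ — better: use the identity $\sigma(S^{2h}) = \sigma(S^{h-1})\cdot\sigma(S^{h})^{?}$... the cleaner route is $\sigma(S^{2h}) \equiv 1 \pmod{P}$ whenever $P\mid S$, and more generally a root $\alpha$ of $P$ satisfies $S(\alpha)^{2h+1}=1$ if $P\mid G$ and $S(\alpha)\neq 1$ (geometric sum), so $G(\alpha)=0$ forces the order of $S(\alpha)$ to divide $2h+1$, which is odd; hence $S(\alpha)\neq 0$ and $S(\alpha)\neq 1$. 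Then from $G'(\alpha)=0$ we need $S'(\alpha)=0$ or $\sigma(S^{h-1})(\alpha)=0$; the latter plus $G(\alpha)=0$ would give (subtracting) $S^h\cdot\sigma(S^{h-1})$-type relations forcing $S(\alpha)^{h}$ or $S(\alpha)=1$, a contradiction with the odd order unless one checks the small arithmetic. The surviving case $S'(\alpha)=0$ means $P\mid\gcd(S,S')$ if $P\mid S$, impossible since $S$ is square-free (indeed irreducible for $S\in\mathcal F$, and $x,x+1$ are trivially square-free); but $P$ need not divide $S$. The genuinely careful argument is: $P\mid G$ and $P\mid G'=S'(\sigma(S^{h-1}))^2$; if $P\mid \sigma(S^{h-1})$ then from $G = \sigma(S^{2h}) = (\sigma(S^{h-1}))^2 + S^{h-1}\big(S\cdot(\text{lower})\big)$... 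I would instead use the telescoping identity $(1+S)\,\sigma(S^{2h}) = 1 + S^{2h+1}$ directly: differentiating, $S'\sigma(S^{2h}) + (1+S)G' = S^{2h}S'$ (since $(2h+1)$ is odd), so $(1+S)G' = S'(S^{2h} + \sigma(S^{2h})) = S'\cdot\sigma(S^{2h-1})\cdot(\ldots)$; at a common root $\alpha$ of $G$ and $G'$ with $\alpha$ not a root of $1+S$ (guaranteed since $S(\alpha)$ has odd multiplicative order $>1$), we get $S'(\alpha)(S(\alpha)^{2h} + 0) = 0$, i.e. $S'(\alpha)S(\alpha)^{2h}=0$, and since $S(\alpha)\neq 0$ this forces $S'(\alpha)=0$. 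Then $\alpha$ is a root of $\gcd$ of... $P\mid S'$; combined with $P\mid G$. Writing the cyclotomic-type factorization, $P\mid G = (1+S^{2h+1})/(1+S)$, so $S(\alpha)$ is a primitive $d$-th root of unity for some $d\mid 2h+1$, $d>1$. The condition $S'(\alpha)=0$ together with $1+S^{2h+1}$ having $\alpha$ as a root: differentiate $1+S^{2h+1}$ to get $S^{2h}S' $, which vanishes at $\alpha$ automatically — no contradiction yet! So I must also use that $\alpha$ is a \emph{simple} root of $1+S^{2h+1}$... and that is where square-freeness of $1+S^{2h+1}$ over the relevant extension comes in, which holds because $\gcd(1+S^{2h+1}, (1+S^{2h+1})') = \gcd(1+S^{2h+1}, S^{2h}S') = \gcd(1+S^{2h+1},S') $ (as $S(\alpha)\neq0$), and any common root of $1+S^{2h+1}$ and $S'$ would be a multiple root of $S$, impossible as $S$ is square-free. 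Hence $1+S^{2h+1}$ is square-free, so $G$ (a divisor of it) is square-free.

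So the real argument, organized: (1) oddness via evaluation at $0$ and $1$, using bar-stability of $\mathcal F$; (2) from the square-freeness of $S$ (immediate: $S\in\mathcal F$ is irreducible, $x,x+1$ obviously square-free) deduce square-freeness of $1+S^{2h+1}$ by the $\gcd$-with-derivative criterion, the key computation being $(1+S^{2h+1})' = S^{2h}S'$ over $\F_2$ and $\gcd(1+S^{2h+1}, S) = \gcd(1, S) = 1$ because $S(\alpha)^{2h+1}=1\neq 0$ at any root $\alpha$ of the first factor; (3) conclude that its divisor $G = \sigma(S^{2h}) = (1+S^{2h+1})/(1+S)$ is square-free. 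I would present it in that order.

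\textbf{Main obstacle.} The only subtlety is step (2): one must be careful that over $\F_2$ the "obvious" derivative relation $(1+S^{2h+1})' = (2h+1)S^{2h}S' = S^{2h}S'$ is valid (it is, since $2h+1$ is odd), and that a common irreducible factor $P$ of $1+S^{2h+1}$ and of $S^{2h}S'$ cannot come from $S$ (because at a root $\alpha$ of $1+S^{2h+1}$ we have $S(\alpha)\neq 0$) and cannot come from $S'$ (because $P\mid S'$ and $P\mid \gcd(S,S')$ would contradict square-freeness of $S$ — but one needs $P\mid S$, which follows since $P\mid S'$ and $P$ irreducible with $\deg P \le \deg S' < \deg S$ forces... no: rather, $P \mid S'$ and $P\mid 1+S^{2h+1}$; reducing $1+S^{2h+1}$ modulo $P$, write $S \equiv R \pmod P$ with $R$ of smaller degree; then $S'\equiv R' + (\ldots)$ — this is getting delicate). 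The clean fix is to work in the splitting field: let $\alpha$ be a root of $P$; $P\mid S'$ means $S'(\alpha)=0$, and $P\mid 1+S^{2h+1}$ means $S(\alpha)$ is a $(2h+1)$-th root of unity. But $S'(\alpha)=0$ says $\alpha$ is a repeated root of $S - S(\alpha)$, i.e. of $S - \lambda$ for $\lambda = S(\alpha)\in\overline{\F_2}$; since $S-\lambda$ and its derivative $S'$ share the root $\alpha$, $S - \lambda$ is not square-free, hence $\gcd(S-\lambda, S') \neq 1$. Now $S' \in \F_2[x]$ is a fixed nonzero polynomial (nonzero because $S$ is not a square — true since $S$ is irreducible of... well $M_1 = 1+x+x^2$ has $M_1' = 1 \neq 0$; in general every irreducible of degree $\ge 1$ over $\F_2$ is separable, so $S'\neq 0$). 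The set of $\lambda$ for which $S-\lambda$ is inseparable is the set of critical values, which is finite, and one would need $\lambda$ to be a root of unity of odd order AND a critical value simultaneously — ruling this out for all $h$ uniformly requires knowing the critical values of each $S\in\mathcal F$ don't include odd-order roots of unity. This is verifiable but slightly annoying; I expect the paper sidesteps it with a slicker observation, perhaps directly: $\sigma(S^{2h})$ square-free $\iff$ $\gcd(\sigma(S^{2h}), S') = 1$ and $\gcd(\sigma(S^{2h}),1+S)=1$, and the latter two are handled by evaluation/order arguments as above. I would therefore anticipate the main work being a clean derivation that $\gcd(\sigma(S^{2h}), \sigma(S^{2h})') = 1$ reduces, via $\sigma(S^{2h})' = S' (\sigma(S^{h-1}))^2$ and the telescoping identity $(1+S)\sigma(S^{2h}) = 1+S^{2h+1}$, to the separability of $S$ together with the fact that $1$ is not a root of $\sigma(S^{2h})$ in $\overline{\F_2}$-points where $S = 1$ — which is exactly the oddness/odd-order input. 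That interplay between the "odd number of terms" and separability is the crux.
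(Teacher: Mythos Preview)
Your oddness argument (evaluate at $0$ and $1$) is fine, and your computation $G' = S'\cdot(\sigma(S^{h-1}))^2$ together with the observation that a common prime $P$ of $G$ and $\sigma(S^{h-1})$ would force $P\mid S^{2h}$ (hence $P=S$, impossible since $G\equiv 1\pmod S$) is exactly what the paper does. The genuine problem is the other branch, and here your write-up contains both an error and an unresolved gap.

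\textbf{The error.} Your plan ``show $1+S^{2h+1}$ is square-free, then its divisor $\sigma(S^{2h})$ is square-free'' fails: $1+S^{2h+1}$ is \emph{not} square-free in general. Take $S=M_4=1+x+x^2+x^3+x^4$. Then $1+M_4 = x(x+1)^3$, so
\[
1+M_4^{2h+1}=(1+M_4)\,\sigma(M_4^{2h})=x(x+1)^3\,\sigma(M_4^{2h}),
\]
which is visibly divisible by $(x+1)^3$. The repeated factors of $1+S^{2h+1}$ live entirely in $1+S$, not in $\sigma(S^{2h})$; your detour through $1+S^{2h+1}$ throws away exactly the information that makes the lemma true. (Concretely, your inference ``$P\mid S'$ forces $P\mid\gcd(S,S')$'' is simply false: $M_4'=(x+1)^2$ while $M_4$ is irreducible.)

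\textbf{The gap you yourself flag.} In the direct approach you correctly reduce to ruling out an odd prime $P$ with $P\mid S'$ and $P\mid\sigma(S^{2h})$, and you say this is ``verifiable but slightly annoying''. This is precisely the step where the paper uses the specific structure of $\mathcal F$. For $S\in\{x,x+1\}\cup\mathcal F_1$ one checks (or cites) that $S'$ has \emph{no} odd prime factor at all: if $S=1+x^a(x+1)^b$ then $S'$ is a monomial in $x$ and $x+1$. For $S=1+x^a(x+1)^bM_1^{\,c}\in\mathcal F_2$, a short product-rule computation shows that the odd part of $S'$ is a power of $M_1$. So the only candidate is $P=M_1$, and this is eliminated by Lemma~\ref{divisorofTT'}: if $\alpha$ is a root of $M_1$ then $S(\alpha)=1$, whence $\sigma(S^{2h})(\alpha)=2h+1=1\neq 0$. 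That is the ``slicker observation'' you were anticipating; it is not a generic separability statement but a concrete fact about the shape of the polynomials in $\mathcal F_2$, and without it (or an equivalent case check) the proof is incomplete.
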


\begin{proof}
Obviously, $\sigma(S^{2h})$ is odd. Moreover, $\sigma(S^{2h})$ is square-free if $S \in \{x,x+1\} \cup {\cal{F}}_1$
(\cite[Lemma 2.6]{Gall-Rahav13}). Now, consider $S = {M_1}^{\langle{a,b,c}\rangle}=1+x^a (x+1)^b {M_1}^c \in {\cal{F}}_2$. Put $T = \sigma({S}^{2h}) = (1+S)(1+S +\cdots + {S}^{h-1})^2 + {S}^{2h}$. One has $T' = S' \cdot (1+S +\cdots + {S}^{h-1})^2$. We claim that $\gcd(T,T') = 1$. Let $D$ be a common prime divisor of $T$ and $T'$. If $D$ divides $1+S +\cdots + {S}^{h-1}$, then $D$ divides ${S}^{2h}$ and hence $D=1$.
If $D$ divides $S'$, then by direct computations, $D \in \{1, M_1\}$ because $D$ is odd. Thus $D=1$, by Lemma \ref{divisorofTT'}.
\end{proof}

\begin{lemma}
\label{divsigmx2h}
If $\sigma(x^{2h})$ and $\sigma((x+1)^{2h})$ factor in ${\cal{F}}$, then $2h \in \{2,4,6,8,12,14\}$.
In this case,
$$\begin{array}{l}
\sigma(x^{2}) = \sigma((x+1)^2) = M_1,\ \sigma(x^{4}) = M_4, \ \sigma((x+1)^{4}) = M_5, \\
\sigma(x^{6}) = \sigma((x+1)^{6}) =  M_2 M_3,\
\sigma(x^{8}) = M_1S_4, \ \sigma((x+1)^{8}) = M_1S_5, \\
\sigma(x^{12}) = S_3,\ \sigma((x+1)^{12})=S_6,\
\sigma(x^{14}) = \sigma((x+1)^{14}) = M_1M_4M_5S_1.
\end{array}$$
\end{lemma}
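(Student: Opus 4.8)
The plan is to reduce the claim to a finite search. First I would observe that $\sigma(x^{2h})$ and $\sigma((x+1)^{2h}) = \overline{\sigma(x^{2h})}$ have degree $2h$, and by Lemma \ref{S2h-squarefree} they are odd and square-free. So if $\sigma(x^{2h})$ factors in $\mathcal{F}$, it is a product of distinct members of $\mathcal{F}$ whose degrees sum to $2h$; by Remark \ref{remarks3} i), $2h \le 184$, which already bounds $h$. To do better, I would use the radical/$\sigma$ structure: since $\sigma(x^{2h}) = 1 + x + \cdots + x^{2h}$ is fixed by the reciprocal map (as noted in the proof of Lemma \ref{factorx2h}), any factorization in $\mathcal{F}$ must be stable under $P \mapsto P^*$; combined with Remark \ref{remarks3} iii) this eliminates $M_9, M_{10}, M_{11}, S_7, S_8, S_{11}, S_{12}, S_{13}$ from appearing in $\sigma(x^{2h})$ unless their $*$-partner also appears, and $M_9 = \overline{M_6}$, $M_{10} = \overline{M_7}$, $M_{11} = \overline{M_8}$ are not self-reciprocal in a way compatible with the remaining candidates. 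This cuts the candidate set substantially.

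Next I would run the finite check. For each $h$ with $2h$ at most the relevant bound, compute $\sigma(x^{2h}) = 1 + x + \cdots + x^{2h} \in \F_2[x]$ and test whether every irreducible factor lies in $\mathcal{F}_1 \cup \mathcal{F}_2$; simultaneously do the same for $\overline{\sigma(x^{2h})} = \sigma((x+1)^{2h})$, since the lemma demands both factor in $\mathcal{F}$. A cleaner organizing principle: a classical fact (used repeatedly in this circle of papers, e.g. \cite{Canaday}) is that $1 + x + \cdots + x^n$ is irreducible over $\F_2$ iff $n+1$ is prime and $2$ is a primitive root mod $n+1$, and more generally its factorization is governed by the multiplicative order of $2$ modulo the divisors of $n+1$; so $\sigma(x^{2h})$ with $2h+1$ odd decomposes according to the cyclotomic factors $\Phi_d(x)$ for $d \mid 2h+1$, $d>1$, each of which splits into $\phi(d)/\mathrm{ord}_d(2)$ irreducibles of degree $\mathrm{ord}_d(2)$. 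Since $\mathcal{F}$ contains only finitely many irreducibles of each small degree (degrees $2,3,4,5,7,9$ for the $M_i$, and the explicit $S_j$), one checks directly that the only $2h+1$ for which every cyclotomic piece is built from members of $\mathcal{F}$ are $2h+1 \in \{3,5,7,9,13,15\}$, i.e. $2h \in \{2,4,6,8,12,14\}$; the excluded small values ($2h = 10$, giving $\Phi_{11}$ of degree $10$, and $2h$ corresponding to $d = 11$) fail because $\mathcal{F}$ has no degree-$10$ or suitable higher-degree factors. Then for each of the six surviving values I would exhibit the explicit factorization as stated in the table, verifying each equality by direct multiplication in $\F_2[x]$ and by matching against the definitions of the $M_i, S_j$ from Section \ref{intro}, e.g. $\sigma(x^{14}) = 1 + x + \cdots + x^{14} = \Phi_3 \Phi_5 \Phi_{15} = M_1 \cdot M_4 M_5 \cdot (\text{degree-}8\text{ piece})$, and one identifies the degree-$8$ factor with $S_1$ — note $\deg S_1 = 4$, so here $\Phi_{15}$ (degree $8$) must split as $M_5 S_1$ or similar; I would recheck the exact pairing against $\deg(S_1) = 4$ and the table's claim $\sigma(x^{14}) = M_1 M_4 M_5 S_1$ (total degree $2+4+4+4 = 14$, consistent).

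The main obstacle is the irreducibility/identification bookkeeping rather than any conceptual difficulty: one must be careful that the cyclotomic factor $\Phi_d(x)$ over $\F_2$ may itself be reducible, and that its irreducible constituents genuinely coincide with the named polynomials in $\mathcal{F}$ and not merely have the right degree — there can be several irreducibles of a given degree, only some of which are Mersenne primes or of the form $M_1^{\langle a,b,c\rangle}$. So the heart of the argument is: (i) bound $h$ using $\deg$ and the reciprocal-stability constraint; (ii) for the finitely many remaining $h$, compute $\sigma(x^{2h})$ and its factorization into $\F_2$-irreducibles; (iii) for each irreducible factor, decide membership in $\mathcal{F}$ by direct comparison (Lemma \ref{allirreduc} guarantees the listed $M_i, S_j$ are the irreducibles they claim to be); (iv) discard any $h$ for which some factor escapes $\mathcal{F}$, or for which $\sigma((x+1)^{2h})$ fails the same test. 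This is exactly the ``simple computer calculation'' alluded to in the introduction, and the output is the displayed table.
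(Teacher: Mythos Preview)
Your proposal is correct and follows essentially the same strategy as the paper: observe that $\sigma((x+1)^{2h}) = \overline{\sigma(x^{2h})}$, use Lemma~\ref{S2h-squarefree} to write $\sigma(x^{2h})$ as a square-free product of elements of $\mathcal{F}$, bound $2h \le 184$ via Remark~\ref{remarks3}(i), and then finish by a finite machine check. The paper's proof stops there and simply runs Maple over $h \le 92$; your additional layers (the $*$-stability filter and the cyclotomic decomposition $\sigma(x^{2h}) = \prod_{d \mid 2h+1,\, d>1} \Phi_d(x)$ governed by $\mathrm{ord}_d(2)$) are sound and make the search more transparent, but they are refinements of the same finite-check argument rather than a different route.
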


\begin{proof}
We remark that $\sigma((x+1)^{2h}) = \overline{\sigma(x^{2h})}$. So, it suffices to consider $X_h:=\sigma(x^{2h})$. One has: $\displaystyle{X_h = \prod_{P \in {\cal{F}}} P^{c_P}}$, where $c_P \in \{0,1\}$, because $X_h$ is square-free. Moreover, $2h = \deg(x^{2h}) \leq 184$, by Remarks \ref{remarks3}-i). Direct (Maple) computations (which are done, for $h \leq 92$) prove the result.
\end{proof}

\begin{lemma}
\label{alldivsigmMers2h}
Let $M \in {\cal{F}}_1$ be such that $\sigma(M^{2h})$ factors in ${\cal{F}}$. Then, $(M=M_1$ and $2h \in \{2,4,6,14\})$ or $(M \in \{M_2, M_3\}$ and $2h=2)$. We get: \\
$\sigma({M_2}^2) = M_1M_5,\ \sigma({M_3}^2) = M_1M_4$,
$\sigma({M_1}^2) = S_1,\ \sigma({M_1}^4) = S_{8}$,\\
$\sigma({M_1}^6) = M_2M_3S_2,\ \sigma({M_1}^{14}) = M_4M_5S_1S_{7}S_{8}.$
\end{lemma}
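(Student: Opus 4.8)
The plan is to mimic exactly the strategy used in Lemma \ref{divsigmx2h}. First I would note that by Lemma \ref{S2h-squarefree}, for any $M \in {\cal{F}}_1$ and any $h \in \N^*$ the polynomial $\sigma(M^{2h})$ is odd and square-free, so if it factors in ${\cal{F}}$ we may write $\sigma(M^{2h}) = \prod_{P \in {\cal{F}}} P^{c_P}$ with each $c_P \in \{0,1\}$. Taking degrees, $2h \cdot \deg(M) = \deg(\sigma(M^{2h})) \leq \sum_{D \in {\cal{F}}} \deg(D) = 184$ by Remarks \ref{remarks3}-i). Since $\deg(M) \geq 2$ for every $M \in {\cal{F}}_1$, this already forces $h \leq 46$, and in fact a sharper bound holds for each individual $M$ (e.g.\ $h \leq 20$ when $\deg(M) = 4$, etc.). So there are only finitely many pairs $(M,h)$ to examine.

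Next, for each of the thirteen polynomials $M = M_1, \ldots, M_{13}$ and each admissible $h$ in the corresponding finite range, I would have Maple compute $\sigma(M^{2h}) = 1 + M + \cdots + M^{2h}$, factor it over $\F_2$, and check whether every irreducible factor lies in ${\cal{F}}$. The computation is a finite, bounded table lookup, entirely analogous to the one invoked in the proof of Lemma \ref{divsigmx2h}. I would also use Remarks \ref{remarks3}-ii) (stability of ${\cal{F}}$ under $Q \mapsto \overline Q$) together with the identity $\overline{\sigma(M^{2h})} = \sigma(\overline M^{\,2h})$ to cut the work roughly in half: since $M_3 = \overline{M_2}$, $M_5 = \overline{M_4}$, $M_9 = \overline{M_6}$, and so on, it suffices to treat one representative of each conjugate pair and then read off the companion result by applying the bar. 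In particular $\sigma({M_3}^2) = \overline{\sigma({M_2}^2)} = \overline{M_1 M_5} = M_1 M_4$, consistent with the claimed list.

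Once the computations are in hand, the verification splits into two parts. For the positive direction, one simply exhibits the six factorizations $\sigma({M_1}^2) = S_1$, $\sigma({M_1}^4) = S_8$, $\sigma({M_1}^6) = M_2 M_3 S_2$, $\sigma({M_1}^{14}) = M_4 M_5 S_1 S_7 S_8$, $\sigma({M_2}^2) = M_1 M_5$, $\sigma({M_3}^2) = M_1 M_4$, each of which is checked by direct polynomial arithmetic and by confirming each listed factor belongs to ${\cal{F}}$ (which is guaranteed by Lemma \ref{allirreduc}). For the converse direction — that no other pair $(M, 2h)$ works — one relies on the finiteness established above: the Maple run over all remaining $(M,h)$ in range produces, in each case, a factorization containing an irreducible polynomial not in ${\cal{F}}$, so the list is exhaustive.

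The main obstacle is not conceptual but computational bookkeeping: one must be sure the degree bound $2h \deg(M) \le 184$ is genuinely exhausted for every $M \in {\cal{F}}_1$ (the excerpt's phrasing ``which are done, for $h \le 92$'' in Lemma \ref{divsigmx2h} suggests the authors push well past the a priori bound to be safe), and one must double-check that each factor appearing in a valid case is correctly identified with its named member of ${\cal{F}}$ rather than a coincidentally equal-degree polynomial. Beyond that, the argument is a routine finite search, and I would present it exactly as the proof of Lemma \ref{divsigmx2h} is presented: reduce to finitely many cases by the degree bound and square-freeness, then invoke the explicit Maple computation.
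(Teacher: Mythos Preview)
Your proposal is correct and follows essentially the same approach as the paper: invoke square-freeness to write $\sigma(M^{2h}) = \prod_{P \in {\cal{F}}} P^{c_P}$ with $c_P \in \{0,1\}$, use the degree bound $4h \le 2h\deg(M) \le 184$ to get $h \le 46$, and finish by direct Maple computation. The bar-symmetry shortcut you add is a nice optimization the paper does not mention, but otherwise the arguments coincide.
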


\begin{proof} As above, we may write
$\displaystyle{\sigma(M^{2h}) = \prod_{P \in {\cal{F}}} P^{c_P}}$, with $c_P \in \{0,1\}$ and $4h \leq 2h\deg(M) \leq 184$. So, $h \leq 46$.
Direct computations (which took about 30 min.) prove our result.
\end{proof}

\begin{lemma}
\label{divsigmS2h}
Let $S \in {\cal{F}}_2$ be such that $\sigma({S}^{2h})$ factors in ${\cal{F}}$, then $2h=2$, $S \in \{S_1,S_2\}$,
$\sigma({S_1}^{2}) = M_4M_5$ and $\sigma({S_2}^{2}) = S_1S_{7}$.
\end{lemma}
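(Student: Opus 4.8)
The plan is to proceed exactly as in the proofs of Lemmas \ref{divsigmx2h} and \ref{alldivsigmMers2h}, namely to reduce the statement to a finite computer search. First I would invoke Lemma \ref{S2h-squarefree} to write, for each $S \in {\cal{F}}_2$,
\[
\sigma(S^{2h}) = \prod_{P \in {\cal{F}}} P^{c_P}, \qquad c_P \in \{0,1\},
\]
so that the factorization in ${\cal{F}}$, if it exists, is squarefree. Comparing degrees gives $2h \deg(S) = \deg(\sigma(S^{2h})) \leq \sum_{D \in {\cal{F}}} \deg(D) = 184$ by Remarks \ref{remarks3}-i). Since every $S \in {\cal{F}}_2$ has $\deg(S) \geq 4$ (each $S = {M_1}^{\langle a,b,c\rangle} = 1+x^a(x+1)^b{M_1}^c$ with $a,b,c \geq 1$, hence $\deg S = a+b+2c \geq 4$), this yields $h \leq 23$, a bounded range.

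Next I would observe that Lemma \ref{divisorofTT'} already removes $M_1$ from the list of candidate divisors of any $\sigma(S^{2h})$ with $S \in {\cal{F}}_2$: in fact this, together with squarefreeness, cuts down the set of admissible prime divisors of $\sigma(S^{2h})$ to ${\cal{F}} \setminus \{M_1\}$, which shortens the search. Then, for each of the $15$ polynomials $S_1,\ldots,S_{15}$ and each $h$ in the bounded range, a direct (Maple) computation factors $\sigma(S^{2h})$ and checks whether every irreducible factor lies in ${\cal{F}}$. The computation will show that the only surviving cases are $h=1$ with $S \in \{S_1, S_2\}$, producing $\sigma({S_1}^2) = M_4 M_5$ and $\sigma({S_2}^2) = S_1 S_7$ (both consistent with the relations recorded in Section \ref{thefamily}, e.g. $(S_2)^* = S_5$ and $\sigma(S_2^2)=S_1S_7$), and that for all other $(S,h)$ some irreducible factor of $\sigma(S^{2h})$ falls outside ${\cal{F}}$.

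The only real obstacle is making the finite search genuinely conclusive rather than merely suggestive: one must be certain that no $\sigma(S^{2h})$ with $S \in {\cal{F}}_2$ and $2 \le 2h$, $h \le 23$, factors entirely within the $28$-element set ${\cal{F}}$. This is not deep but requires care in the implementation — correctly enumerating the cases, verifying irreducibility of the claimed factors (already guaranteed by Lemma \ref{allirreduc} for members of ${\cal{F}}$), and confirming that in every rejected case the offending factor is provably irreducible and provably not equal to any element of ${\cal{F}}$ (a comparison of finitely many explicit polynomials over $\F_2$). Once the search is organized this way, the lemma follows by inspection of its output, exactly as Lemmas \ref{divsigmx2h} and \ref{alldivsigmMers2h} did.
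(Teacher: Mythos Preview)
Your proposal is correct and essentially identical to the paper's proof: the paper simply writes ``Analogous proof: here, $8h \leq 2h\deg(S) \leq 184$. So, $h \leq 23$'' and defers to a Maple computation, which is exactly the squarefreeness-plus-degree-bound argument you spell out. Your additional invocation of Lemma~\ref{divisorofTT'} to exclude $M_1$ is a harmless optimization the paper does not mention, but the underlying method is the same.
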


\begin{proof}
Analogous proof: here, $8h \leq 2h \deg(S) \leq 184$. So, $h \leq 23$ (computations took 125 s).
\end{proof}

Lemmas \ref{divsigmx2h}, \ref{alldivsigmMers2h} and \ref{divsigmS2h} imply:

\begin{corollary}
\label{theoddivisors}
i) If $M_i$ and $S_j$ divide $\sigma(A)$, then $i \leq 5$ and $j \leq 8$.\\
ii)  For any $j \in \{2,\ldots,6\}$, ${S_j}^2$ does not divide $\sigma(A)$.
\end{corollary}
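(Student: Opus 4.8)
The plan is to read every odd irreducible factor of $\sigma(A)$, together with its exact multiplicity, off the product formula (\ref{sigmaAdetails}), and then to match this list against Lemmas \ref{divsigmx2h}, \ref{alldivsigmMers2h} and \ref{divsigmS2h}. For part i), I would argue as follows. By (\ref{sigmaAdetails}) every odd prime $P\mid\sigma(A)$ divides one of $1+M_i$, $1+S_j$, $\sigma(x^{u-1})$, $\sigma((x+1)^{v-1})$, $\sigma(M_i^{u_i-1})$ or $\sigma(S_j^{v_j-1})$. Since each $M_i$ is a Mersenne prime (Lemma \ref{allirreduc}), $1+M_i$ is a product of powers of $x$ and $x+1$, hence has no odd factor; and since each $S_j$ has the shape $1+x^{\alpha}(x+1)^{\beta}M_1^{\gamma}$, the only odd prime dividing $1+S_j$ is $M_1$. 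The four remaining families are the odd, square-free polynomials $\sigma(S^{2h})$ with $S\in\{x,x+1\}\cup\mathcal{F}$ (Lemma \ref{S2h-squarefree}); in order for $\sigma(A)$ to have the shape (\ref{expressionsigmA}) each of them must factor in $\mathcal{F}$, so Lemmas \ref{divsigmx2h}, \ref{alldivsigmMers2h} and \ref{divsigmS2h} apply, and the explicit right-hand sides written there involve no member of $\mathcal{F}_1$ beyond $M_1,\dots,M_5$ and no member of $\mathcal{F}_2$ beyond $S_1,\dots,S_8$. Adjoining the $M_1$ coming from the $1+S_j$'s gives $i\le5$ and $j\le8$.

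For part ii), I would treat $j=2,3,4,5,6$ separately, using the same three lemmas to pin down the \emph{single} block of (\ref{sigmaAdetails}) through which $S_j$ can enter $\sigma(A)$: $S_3=\sigma(x^{12})$ and $S_4\mid\sigma(x^{8})$ can come only from $\sigma(x^{u-1})$; $S_6=\sigma((x+1)^{12})$ and $S_5\mid\sigma((x+1)^{8})$ only from $\sigma((x+1)^{v-1})$; and $S_2\mid\sigma(M_1^{6})$ only from $\sigma(M_1^{u_1-1})$ — indeed none of $S_2,\dots,S_6$ divides a $1+M_i$, a $1+S_k$, or any other $\sigma(M_i^{2h})$ or $\sigma(S_k^{2h})$. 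Hence, if $S_j^2\mid\sigma(A)$, then $S_j$ occurs in $\sigma(A)$ with exponent $2^{\ell}$ for some $\ell\ge1$, where $\ell$ is the $2$-adic valuation of $a+1$, $b+1$ or $c_1+1$ according to the case. Using $\sigma(A)=A$ this forces $d_j=2^{\ell}$, hence $d_j$ even; since $d_j=2^{m_j}v_j-1$ by (\ref{lesexposants}), this gives $m_j=0$, so $d_j=v_j-1$ and $\sigma(S_j^{d_j})=\sigma(S_j^{2^{\ell}})$, which is odd (Lemma \ref{S2h-squarefree}) and must therefore factor in $\mathcal{F}$. For $j\in\{3,4,5,6\}$ this contradicts Lemma \ref{divsigmS2h}, which lets $\sigma(S^{2h})$ factor in $\mathcal{F}$ only when $S\in\{S_1,S_2\}$; so those four values of $j$ are done.

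The remaining value $j=2$ is where the argument is not immediate and will be the main obstacle: Lemma \ref{divsigmS2h} still permits $\ell=1$, $d_2=2$, $v_2=3$ and $\sigma(S_2^{2})=S_1S_7$, which in turn forces $c_1=13$ and hence $M_2^2M_3^2\mid\sigma(M_1^{13})\mid A$ as well as $S_1,S_7\mid A$; eliminating this branch requires pushing the same reasoning one further step, on the primes $M_2,M_3,S_1,S_7$ thereby forced into $A$. Apart from this residual case, the proof is routine: part i) is a table lookup in the three lemmas, the identification of the unique source of each $S_j$ is again just reading those lemmas, and the parity step on $d_j$ uses only (\ref{lesexposants}) together with $\sigma(A)=A$.
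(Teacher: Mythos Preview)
Your treatment of part i) is correct and is exactly the paper's argument: both simply inspect the explicit factorizations in Lemmas \ref{divsigmx2h}--\ref{divsigmS2h}, observe that $1+M_i=x^{a_i}(x+1)^{b_i}$ and $1+S_k=x^{\alpha_k}(x+1)^{\beta_k}M_1^{\nu_k}$ contribute no odd prime other than $M_1$, and conclude that no $M_i$ with $i\ge 6$ and no $S_j$ with $j\ge 9$ can appear in $\sigma(A)$.

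For part ii) your route departs from the paper's, and there is a real gap. The paper's whole proof of ii) is the single sentence: $S_2,S_3,S_4,S_5,S_6$ divide \emph{only} $\sigma(M_1^{6}),\ \sigma(x^{12}),\ \sigma(x^{8}),\ \sigma((x+1)^{8}),\ \sigma((x+1)^{12})$ respectively, hence $S_j^2\nmid\sigma(A)$. That is, the paper stops precisely at the ``unique source'' observation you already isolated (together with the square-freeness from Lemma \ref{S2h-squarefree}); it never invokes $\sigma(A)=A$ and it treats $j=2$ on the same footing as $j=3,4,5,6$. Your argument, by contrast, imports the hypothesis $\sigma(A)=A$ --- which in the paper is only introduced at the start of Section \ref{sigmAandA}, \emph{after} this corollary --- in order to convert $\delta_j=2^{\ell}$ into $d_j=2^{\ell}$ and run a parity analysis on $d_j=2^{m_j}v_j-1$. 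That step is not available at this point, and it also leaves you with the unresolved branch $j=2$, $d_2=2$, $v_2=3$, which you yourself flag as the ``main obstacle''. None of this extra machinery is present in (or needed for) the paper's proof: once each $S_j$ has been traced to its unique block, the paper's argument is finished. If you were led to the longer route because the bare unique-source statement does not obviously control the outer $2$-power exponent in \eqref{sigmaAdetails}, note that the paper does not address that point either; its proof is literally the one-line deduction above.
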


\begin{proof}
i): For any $i \geq 6$ and $j \geq 9$, neither $M_i$ nor $S_j$ divides $\sigma(A)$.\\
ii): $S_2, S_3, S_4, S_5, S_6$ respectively divide only $\sigma({M_1}^6), \sigma(x^{12}), \sigma(x^8), \sigma((x+1)^8)$ and $\sigma((x+1)^{12})$. So, for any $j \in \{2,\ldots,6\}$, ${S_j}^2$ does not divide $\sigma(A)$.
\end{proof}
For $w \in \N^*$, $\chi_{w}$ denotes the indicator function of the singleton $\{w\}$:
$$\text{$\chi_{w}(w) = 1, \chi_{w}(t) = 0$ if $ t\not= w$.}$$
According to notations in (\ref{lesexposants}), put:
$$\begin{array}{l}
\text{$M_i = 1+x^{a_i} (x+1)^{b_i} \in {\cal{F}}_1$, $S_j = 1+x^{\alpha_j} (x+1)^{\beta_j} {M_1}^{\nu_j} \in {\cal{F}}_2$},\\
\text{$\xi_1 = \chi_{3}(u) + \chi_{9}(u) + \chi_{15}(u), \ \xi_2 = \chi_{3}(v) + \chi_{9}(v) + \chi_{15}(v)$},\\
\text{$\xi_3 = \chi_5(u) + \chi_{15}(u), \ \xi_4 = \chi_5(v) + \chi_{15}(v)$}.
\end{array}
$$
We obtain from (\ref{expressionsigmA}) and equalities in (\ref{sigmaAdetails}):
\begin{lemma}
\label{exponentsofsigmA}
The integers $\alpha, \beta, {\gamma_i}$'s and ${\delta_j}$'s satisfy:
$$\begin{array}{l}
\displaystyle{\alpha = 2^m-1 + \sum_{i=1}^5 (2^{n_i}-1)a_i + \sum_{j=1}^8 (2^{m_j}-1)\alpha_j,}\\
\displaystyle{\beta = 2^n-1 + \sum_{i=1}^5 (2^{n_i}-1)b_i + \sum_{j=1}^8 (2^{m_j}-1)\beta_j,}\\
\displaystyle{\gamma_1 = \sum_{j=1}^8 (2^{m_j}-1)\nu_j + \xi_1 \cdot 2^n + \xi_2 \cdot 2^m + \chi_{3}(u_2) \cdot 2^{n_2}
+\chi_{3}(u_3) \cdot 2^{n_3}},\\
\displaystyle{\gamma_2 = \gamma_3=\chi_7(u) \cdot 2^n + \chi_7(v) \cdot 2^m + \chi_7(u_1) \cdot 2^{n_1},}\\
\displaystyle{\gamma_4 = \xi_3 \cdot 2^n + \chi_{15}(v) \cdot 2^m + \chi_{15}(u_1) \cdot 2^{n_1}+
\chi_{3}(u_3) \cdot 2^{n_3} + \chi_{3}(v_1) \cdot 2^{m_1}},\\
\displaystyle{\gamma_5 = \chi_{15}(u) \cdot 2^n + \xi_4 \cdot 2^m + \chi_{15}(u_1) \cdot 2^{n_1}+
\chi_{3}(u_2) \cdot 2^{n_2} + \chi_{3}(v_1) \cdot 2^{m_1}},\\
\displaystyle{\delta_1 = \chi_{15}(u) \cdot 2^n + \chi_{15}(v) \cdot 2^m + (\chi_{3}(u_1) + \chi_{15}(u_1)) \cdot 2^{n_1}},\\
\delta_2= \chi_{7}(u_1) \cdot 2^{n_1},\ \delta_3= \chi_{13}(u) \cdot 2^{n},\ \delta_4= \chi_{9}(u) \cdot 2^{n},\ \delta_5= \chi_{9}(v) \cdot 2^{m},\\
\delta_6= \chi_{13}(v) \cdot 2^{m},\ \delta_7 = (\chi_{5}(u_1) + \chi_{15}(u_1)) \cdot 2^{n_1}, \ \delta_8 = \chi_{15}(u_1) \cdot 2^{n_1}.
\end{array}$$
\end{lemma}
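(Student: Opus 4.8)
The proof of Lemma~\ref{exponentsofsigmA} is a bookkeeping computation: one expands the product formula~(\ref{sigmaAdetails}) for $\sigma(A)$ and collects, for each fixed prime $P \in \{x, x+1\} \cup \mathcal{F}$, the total exponent with which $P$ occurs in $\sigma(A)$. So the plan is to go factor by factor through the right-hand side of~(\ref{sigmaAdetails}) and record where each of $x$, $x+1$, $M_1,\dots,M_5$, $S_1,\dots,S_8$ can possibly come from, using Corollary~\ref{theoddivisors}-i) to discard all $M_i$ with $i \geq 6$ and all $S_j$ with $j \geq 9$ as irrelevant.

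First I would treat the two ``even'' contributions. From $\sigma(x^a) = (x+1)^{2^n-1}[\sigma(x^{u-1})]^{2^n}$, the pure power $(x+1)^{2^n-1}$ feeds the exponent of $x+1$, while $[\sigma(x^{u-1})]^{2^n}$ contributes odd primes: by Lemma~\ref{divsigmx2h} (applied with $2h = u-1$, which is relevant only when $u \in \{3,9,15,5,7,13\}$), $\sigma(x^{u-1})$ equals $M_1$, $M_4$, $M_1S_4$, $M_2M_3S_2$, $S_3$ or $M_1M_4M_5S_1S_7S_8$ accordingly, each copy carrying multiplicity $2^n$ — this is exactly what the indicator functions $\chi_3(u),\chi_9(u),\dots$ and the aggregates $\xi_1 = \chi_3(u)+\chi_9(u)+\chi_{15}(u)$, $\xi_3 = \chi_5(u)+\chi_{15}(u)$ encode. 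Symmetrically, $\sigma((x+1)^b)$ contributes $x^{2^m-1}$ to the exponent of $x$ and, via $\overline{\sigma(x^{v-1})}$, the bar-images $M_1, M_5, M_1S_5, M_2M_3S_2, S_6, M_1M_4M_5S_1\cdots$ with multiplicity $2^m$, giving the $\xi_2,\xi_4,\chi_\bullet(v)$ terms.

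Next I would handle $\sigma(M_i^{c_i})$ for $i \leq 5$ and $\sigma(S_j^{d_j})$ for $j \leq 8$. For each such $P$ with exponent $c_P = 2^{k_P}w_P - 1$, formula~(\ref{sigmaAdetails}) splits $\sigma(P^{c_P})$ into the pure power $(1+P)^{2^{k_P}-1}$ and $[\sigma(P^{w_P-1})]^{2^{k_P}}$. The pure-power part: writing $M_i = 1+x^{a_i}(x+1)^{b_i}$ and $S_j = 1+x^{\alpha_j}(x+1)^{\beta_j}M_1^{\nu_j}$, we have $1+M_i = x^{a_i}(x+1)^{b_i}$ and $1+S_j = x^{\alpha_j}(x+1)^{\beta_j}M_1^{\nu_j}$, so $(1+M_i)^{2^{n_i}-1}$ contributes $(2^{n_i}-1)a_i$ and $(2^{n_i}-1)b_i$ to $\alpha$ and $\beta$, while $(1+S_j)^{2^{m_j}-1}$ contributes $(2^{m_j}-1)\alpha_j$, $(2^{m_j}-1)\beta_j$ and $(2^{m_j}-1)\nu_j$ to $\alpha$, $\beta$ and $\gamma_1$ — these are precisely the sums $\sum_{i=1}^5$ and $\sum_{j=1}^8$ in the formulas for $\alpha$, $\beta$, $\gamma_1$. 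The nontrivial-power part $[\sigma(P^{w_P-1})]^{2^{k_P}}$ is governed by Lemmas~\ref{alldivsigmMers2h} and~\ref{divsigmS2h}: the only cases with $\sigma(P^{w_P-1})$ factoring in $\mathcal{F}$ are $M_1$ (with $w_1 \in \{3,9,15\}$, where $\sigma(M_1^2)=S_1$, $\sigma(M_1^4)=S_8$, $\sigma(M_1^6)=M_2M_3S_2$, $\sigma(M_1^{14})=M_4M_5S_1S_7S_8$), $M_2, M_3$ (with $w=3$, $\sigma(M_2^2)=M_1M_5$, $\sigma(M_3^2)=M_1M_4$), $S_1$ (with $w=3$, $\sigma(S_1^2)=M_4M_5$) and $S_2$ (with $w=3$, $\sigma(S_2^2)=S_1S_7$). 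Reading off which primes appear in each of these with what multiplicity $2^{k_P}$ produces the remaining indicator terms: e.g. $\chi_3(u_1)\cdot 2^{n_1}$ in $\gamma_4$ (from $S_1 \mid \sigma(M_1^2)$ when $u_1=3$), $\chi_7(u_1)\cdot 2^{n_1}$ in $\gamma_2=\gamma_3$ and in $\delta_2$ (from $\sigma(M_1^{14})$ when $u_1 \in \{\dots,15,\dots\}$; note $\chi_7(u_1)$ is how the paper writes ``$u_1$ has the form making $w_1-1=14$'' after the substitution bookkeeping), $\chi_{15}(u_1)$ in $\delta_1,\delta_7,\delta_8$, and the $\chi_3(u_2)\cdot 2^{n_2}$, $\chi_3(u_3)\cdot 2^{n_3}$ terms in $\gamma_1,\gamma_4,\gamma_5$ (from $M_1, M_4, M_5 \mid \sigma(M_3^2), \sigma(M_2^2)$). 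Finally I would assemble, for each target prime, the complete list of source contributions and check it matches the displayed formula line by line.

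The main obstacle is not conceptual but combinatorial completeness: one must be certain that no possible source of a given prime has been overlooked. Concretely, for each of the fourteen relevant primes ($x$, $x+1$, $M_1,\dots,M_5$, $S_1,\dots,S_8$) one has to scan all $2 + 5 + 8 = 15$ factors on the right of~(\ref{sigmaAdetails}) — each of which splits into a ``$+1$ part'' and a ``$\sigma(\cdot^{2h})$ part'' — and confirm, via Lemmas~\ref{divsigmx2h}, \ref{alldivsigmMers2h}, \ref{divsigmS2h} and Corollary~\ref{theoddivisors}, exactly which contribute. The square-freeness statement of Lemma~\ref{S2h-squarefree} is what guarantees every $\sigma(S^{2h})$ contributes each prime with multiplicity at most once (so the exponent is literally $2^{k_P}$ and not a larger multiple), and Corollary~\ref{theoddivisors}-ii) is what prevents double-counting for $S_2,\dots,S_6$. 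Carrying this out carefully and cross-checking against the formula is the whole of the proof; there is no hidden inequality or irreducibility argument beyond the cited lemmas.
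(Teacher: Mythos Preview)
Your overall plan is exactly what the paper does: the paper offers no written-out proof beyond the sentence ``We obtain from (\ref{expressionsigmA}) and equalities in (\ref{sigmaAdetails})'', so the lemma is meant as a straight bookkeeping read-off from Lemmas~\ref{divsigmx2h}, \ref{alldivsigmMers2h}, \ref{divsigmS2h} and Corollary~\ref{theoddivisors}, which is precisely your strategy.

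However, several of your concrete attributions are wrong and would make the line-by-line check fail. First, you list the factorizations of $\sigma(x^{u-1})$ as including $M_2M_3S_2$ and $M_1M_4M_5S_1S_7S_8$; those are the factorizations of $\sigma(M_1^6)$ and $\sigma(M_1^{14})$, not of $\sigma(x^6)$ and $\sigma(x^{14})$ (Lemma~\ref{divsigmx2h} gives $\sigma(x^6)=M_2M_3$ and $\sigma(x^{14})=M_1M_4M_5S_1$). Second, your example ``$\chi_3(u_1)\cdot 2^{n_1}$ in $\gamma_4$ (from $S_1\mid\sigma(M_1^2)$)'' is doubly off: $\sigma(M_1^2)=S_1$ contributes to $\delta_1$, not to $\gamma_4$, and the $M_1$-term actually present in $\gamma_4$ is $\chi_{15}(u_1)\cdot 2^{n_1}$, coming from $M_4\mid\sigma(M_1^{14})$. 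Third, your reading of $\chi_7(u_1)$ is inverted: $\chi_7(u_1)=1$ means $u_1=7$, i.e.\ $u_1-1=6$, so the source of the $\chi_7(u_1)\cdot 2^{n_1}$ terms in $\gamma_2=\gamma_3$ and $\delta_2$ is $\sigma(M_1^6)=M_2M_3S_2$, not $\sigma(M_1^{14})$. None of these are conceptual gaps --- the method is right --- but each would have to be corrected before your table of contributions matches the displayed formulas.
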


\subsection{More necessary conditions for $A$ to be perfect} \label{sigmAandA}

We suppose that $A$ is perfect ($A=\sigma(A)$).  We give necessary conditions on the exponent $m$ of each prime divisor $P$ of $A$ (i.e., for $m$ satisfying: $P^m \mid \mid A$).
Those conditions are very useful for computing $A$.
We keep the notations in (\ref{lesexposants}), (\ref{sigmaAdetails}) and (\ref{expressionsigmA}).

\begin{lemma} \label{corol1A=sigmA}
If $A$ is perfect, then:\\
i) $a = 2^n u - 1, b=2^mv-1$ where $n,m \in \N$, $u, v \in \{1,3,5,7,9,13,15\}$ and
$u\geq 3$ or $v \geq 3$.\\
ii)  $c_i = 0$ and $d_j = 0$, for any $i \geq 6$ and $j \geq 9$.\\
iii) $c_1 =2^{n_1} u_1 -1$ where $u_1 \in \{1,3,5,7,15\}$.\\
iv)  $c_i =2^{n_i} u_i -1$, with $u_i \in \{1,3\}$ if $i \in \{2,3\}$, $u_i = 1$ if $i \in \{4,5\}$.\\
v) $d_j = 2^{m_j} v_j -1$ where $v_1 \in \{1,3\}$, $v_j =1$ if $j \in \{2,\ldots,8\}$.
\end{lemma}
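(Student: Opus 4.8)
The plan is to read off each assertion from the identity $A=\sigma(A)$ by comparing, prime by prime, the exponents in the factorization \eqref{formofA} with the exponents $\alpha,\beta,\gamma_i,\delta_j$ computed in Lemma \ref{exponentsofsigmA}. The first observation is that, since $A=\sigma(A)$, a prime $P$ divides $A$ if and only if it divides $\sigma(A)$. Hence ii) is immediate from Corollary \ref{theoddivisors}~i): for $i\geq 6$ no $M_i$ divides $\sigma(A)$, so $c_i=0$, and for $j\geq 9$ no $S_j$ divides $\sigma(A)$, so $d_j=0$. This reduces $A$ to the shape $x^a(x+1)^b\prod_{i=1}^{5}M_i^{c_i}\prod_{j=1}^{8}S_j^{d_j}$, which is exactly the range of indices appearing in Lemma \ref{exponentsofsigmA}.

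Next I would establish i). Write $a=2^nu-1$, $b=2^mv-1$ with $u,v$ odd as in \eqref{lesexposants}; the constraint ``$u\geq 3$ or $v\geq 3$'' is just the non-splitting hypothesis (cf.\ the proof of Lemma \ref{bar-star-stable}~i)). The point is to bound $u$ and $v$. From $A=\sigma(A)$ we have $a=\alpha$, i.e.\ $2^nu-1=\alpha$ where $\alpha$ is the expression in Lemma \ref{exponentsofsigmA}. The key structural fact is that $\sigma(x^{u-1})$ must itself factor in $\{x+1\}\cup{\cal F}$ (because it divides $\sigma(x^a)$, hence $\sigma(A)=A$), and since $\sigma(x^{u-1})$ is odd it must factor in $\{x+1\}\cup{\cal F}_1\cup{\cal F}_2$; more precisely $\sigma((x+1)^{u-1})$ enters $\sigma((x+1)^b)$ and by the non-splitting analysis one of $\sigma(x^{u-1})$, $\sigma((x+1)^{u-1})$ factors in ${\cal F}$. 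By Lemma \ref{divsigmx2h} this forces $u-1\in\{0,2,4,6,8,12,14\}$, i.e.\ $u\in\{1,3,5,7,9,13,15\}$, and symmetrically for $v$. (The values $u=11$, $u=13$ with $u-1=12$: note $u-1=12$ gives $u=13$, consistent; $u-1=10$ would give $u=11$, which is excluded since $10\notin\{2,4,6,8,12,14\}$.) This is exactly i).

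Then iii), iv), v) follow the same template applied to the odd primes. For a prime $M_i$ (resp.\ $S_j$) with $M_i^{c_i}\,\|\,A$, write $c_i=2^{n_i}u_i-1$ with $u_i$ odd; if $u_i\geq 3$ then $\sigma(M_i^{u_i-1})$ divides $\sigma(M_i^{c_i})$, hence divides $A=\sigma(A)$, so it factors in $\{x,x+1\}\cup{\cal F}$, and being odd it factors in $\{x,x+1\}\cup{\cal F}_1\cup{\cal F}_2$; since $\sigma(M_i^{2h})$ with $2h=u_i-1$ actually factors in ${\cal F}$ (all the primes involved lie in ${\cal F}$ by ii) and Corollary \ref{theoddivisors}), Lemma \ref{alldivsigmMers2h} pins down the admissible $(M_i,u_i-1)$: only $M_1$ with $u_1-1\in\{2,4,6,14\}$, giving $u_1\in\{1,3,5,7,15\}$ (iii)), and $M_2,M_3$ with $u_i-1=2$, i.e.\ $u_i\in\{1,3\}$, while for $M_4,M_5$ no value $u_i-1\geq 2$ is admissible so $u_i=1$ (iv)). For the $S_j$'s, Lemma \ref{divsigmS2h} says $\sigma(S_j^{2h})$ factors in ${\cal F}$ only for $(S_1,2)$ and $(S_2,2)$, so $v_j\geq 3$ is possible only when... actually one must be slightly careful here: Lemma \ref{divsigmS2h} would allow $v_1-1=2$ for $j=1$, giving $v_1\in\{1,3\}$, and forces $v_j=1$ for $j\in\{2,\dots,8\}$ (v)). The one subtlety I expect to be the main obstacle is making the reduction ``$\sigma(S^{u-1})$ divides $\sigma(A)=A$, therefore it factors in ${\cal F}$ (not merely in some larger set)'' fully rigorous: a priori $\sigma(x^{u-1})$ could have a prime divisor outside ${\cal F}$ that is cancelled or absorbed, but since $A=\sigma(A)$ has all its odd prime divisors in ${\cal F}$ by hypothesis, every odd prime dividing $\sigma(x^{u-1})$ lies in ${\cal F}$, and the linear factors are handled by the non-splitting argument of Lemma \ref{bar-star-stable}~i); chasing this carefully through all four places ($x$, $x+1$, $M_i$, $S_j$) is the bulk of the work, but each step then terminates in a direct appeal to one of Lemmas \ref{divsigmx2h}, \ref{alldivsigmMers2h}, \ref{divsigmS2h}.
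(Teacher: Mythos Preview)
Your overall strategy matches the paper's: use $A=\sigma(A)$ to force each $\sigma(S^{u-1})$ (for $S\in\{x,x+1\}\cup{\cal F}$) to factor in ${\cal F}$, then read off the constraints on $u,v,u_i,v_j$ from Lemmas \ref{divsigmx2h}, \ref{alldivsigmMers2h}, \ref{divsigmS2h}, and invoke Corollary \ref{theoddivisors}(i) for part ii). That is exactly how the paper proceeds.

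There is, however, a genuine gap in your treatment of v). You correctly record that Lemma \ref{divsigmS2h} allows the pairs $(S_1,2)$ \emph{and} $(S_2,2)$, yet you then assert that it ``forces $v_j=1$ for $j\in\{2,\dots,8\}$''. It does not: since $\sigma(S_2^{2})$ factors in ${\cal F}$, Lemma \ref{divsigmS2h} by itself only yields $v_2\in\{1,3\}$, exactly as for $v_1$. To conclude $v_2=1$ the paper uses the extra input of Corollary \ref{theoddivisors}(ii): $S_2$ appears in $\sigma(A)$ only through the single factor of $\sigma(M_1^{6})$, so $S_2^{2}\nmid\sigma(A)=A$, hence $d_2\le 1$ and $v_2=1$. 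Your plan never invokes this, so the case $j=2$ is not closed.

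A second, smaller point: the clause ``$u\ge 3$ or $v\ge 3$'' in i) is not literally the non-splitting hypothesis. The paper's argument is that if $u=v=1$ then $x^a(x+1)^b$ is itself a (trivial) perfect polynomial, whence $A_1$ is odd perfect with $A_1\neq 1$, and $A=x^a(x+1)^b\cdot A_1$ splits as a product of two coprime nonconstant perfect factors, contradicting the standing assumption that $A$ is \emph{indecomposable}. Non-splitting alone (i.e.\ $A_1\neq 1$) does not immediately exclude $u=v=1$; you need indecomposability here.
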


\begin{proof}
i): $(x+1)^{2^n-1} \sigma(x^{u-1}) = \sigma(x^a)$ divides $\sigma(A)=A$. So, $\sigma(x^{u-1})$ divides $A$ and $u-1 \in \{0,2,4,6,8,12,14\}$, by Lemma \ref{divsigmx2h}.\\
Analogously, one has: $v-1 \in \{0,2,4,6,8,12,14\}$.\\
If $u=v=1$, then $x^a(x+1)^b$ is perfect. Thus, $A_1$ is odd and perfect, with $A_1 \not=1$. It contradicts the fact that $A$ is indecomposable.\\
Similar arguments from Lemmas \ref{alldivsigmMers2h} and \ref{divsigmS2h} give: ii), iii), iv) and the first part of v).\\
Finally, $S_2$ divides $\sigma({M_1}^{6})$, $S_7$ divides both $\sigma({M_1}^{14})$ and $\sigma({S_2}^2)$. But, by Corollary \ref{theoddivisors}, ${S_2}^2$ does not divide $\sigma(A)=A$. Hence, $v_2 = v_7 = 1$.
\end{proof}

\begin{lemma} \label{aboutn2345m1}
One has: $n_2, n_3, m_1 \leq 3$ and $n_4, n_5 \leq 5$.
\end{lemma}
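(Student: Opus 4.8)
\textbf{Proof plan for Lemma \ref{aboutn2345m1}.}

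The plan is to bound each of the exponents $n_2,n_3,n_4,n_5,m_1$ by exploiting the fact that the corresponding primes $M_2,M_3,M_4,M_5,S_1$ divide $A=\sigma(A)$ only through a very short list of $\sigma$-factorisations, together with the explicit formulas for $\gamma_2,\gamma_3,\gamma_4,\gamma_5,\delta_1$ in Lemma \ref{exponentsofsigmA}. Concretely, suppose $M_4^{c_4}\,\|\,A$ with $c_4=2^{n_4}u_4-1$; by Lemma \ref{corol1A=sigmA}(iv) we have $u_4=1$, so $c_4=2^{n_4}-1$. On the other hand $A=\sigma(A)$ forces $c_4=\gamma_4$, and the formula for $\gamma_4$ in Lemma \ref{exponentsofsigmA} writes $\gamma_4$ as a sum of at most five terms of the shape $\chi_{\bullet}(\cdot)\cdot 2^{\star}$, where each $\star$ is one of $n,m,n_1,n_3,m_1$. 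The idea is that such a constrained sum cannot equal $2^{n_4}-1$ unless $n_4$ is small: $2^{n_4}-1$ in binary is a string of $n_4$ ones, whereas $\gamma_4$ is a sum of at most five powers of two (with multiplicities coming from the $\chi$-coefficients being $0$ or $1$), so its binary weight is at most five after carries — but more to the point, each summand $2^{\star}$ is itself bounded once we control $n,m,n_1,n_3,m_1$, and those are in turn controlled by the degree budget.

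The cleanest route, I expect, is the degree/counting argument rather than a binary-digit argument. Since every odd prime divisor of $A$ lies in ${\cal F}$ and $\sum_{D\in{\cal F}}\deg(D)=184$ (Remarks \ref{remarks3}(i)), and since $\gamma_4=c_4$ must be realised as a sum of terms $\chi_{\bullet}(\cdot)2^{\star}$ each of which is at most the exponent to which the relevant prime ($x,x+1,M_1,M_3,S_1$) actually occurs in $A$, one gets $2^{n_4}-1=\gamma_4\le 2^{n}+2^{m}+2^{n_1}+2^{n_3}+2^{m_1}$ at worst, and then $2^{n}\le a+1$, $2^{m}\le b+1$, $2^{n_1}\le c_1+1$, etc. The point is that each of $n,m,n_1,n_3,m_1$ is bounded by an absolute constant because $\deg(A)\le \deg(\sigma(A))$ and, more usefully, because the $\sigma$-tables (Lemmas \ref{divsigmx2h}, \ref{alldivsigmMers2h}, \ref{divsigmS2h}) together with the indecomposability of $A$ pin down $u,v,u_i,v_j$ to the tiny sets in Lemma \ref{corol1A=sigmA}. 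So the strategy is: first observe that $\gamma_4$ involves only the exponents $n$ (of $x$), $m$ (of $x+1$), $n_1$ (of $M_1$), $n_3$ (of $M_3$), $m_1$ (of $S_1$); then compare degrees on the two sides of $A=\sigma(A)$ restricted to these primes to get an inequality $2^{n_4}\le C$ for an explicit small $C$, and read off $n_4\le 5$. The symmetric (bar-conjugate) statement handles $n_5\le 5$ via $\gamma_5$. For $n_2=n_3$: the formula $\gamma_2=\gamma_3=\chi_7(u)2^n+\chi_7(v)2^m+\chi_7(u_1)2^{n_1}$ has only three terms, each a power of two whose exponent is bounded as above, giving $2^{n_2}-1=\gamma_2\le 2^n+2^m+2^{n_1}$ and hence $n_2\le 3$. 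Similarly $\delta_1=\chi_{15}(u)2^n+\chi_{15}(v)2^m+(\chi_3(u_1)+\chi_{15}(u_1))2^{n_1}$ forces $2^{m_1}-1=\delta_1\le 2^n+2^m+2\cdot2^{n_1}$, giving $m_1\le 3$.

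To make the inequalities explicit one needs bounds on $n,m,n_1$. Here I would use that $a=2^nu-1$ with $u\in\{1,3,5,7,9,13,15\}$ and $a+1\le\deg(A)+1$; since $A$ has all odd primes in ${\cal F}$ and $A=\sigma(A)$, one has $\deg(A)=\deg(\sigma(A))$, and $\deg(\sigma(x^a))=a$ contributes to $\deg(A)$, so $a\le\deg(A)$. A crude but sufficient bound: by Remark \ref{nosotrivial} style reasoning and the fact that $\alpha$ in Lemma \ref{exponentsofsigmA} is a bounded-length sum, one gets $a=\alpha\le (2^m-1)+\sum(2^{n_i}-1)a_i+\sum(2^{m_j}-1)\alpha_j$; combining the formula for $a$ with that for $\alpha$ and the analogous ones for $b$, $c_1$, one obtains a finite system whose solutions force $n,m\le$ some small bound and $n_1\le$ some small bound (this is exactly the ``kind of algorithm'' the introduction advertises). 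The honest statement is that $n,m,n_1$ are each at most, say, $7$ or so from the degree budget $184$ once $u,v,u_1$ are restricted as in Lemma \ref{corol1A=sigmA}; plugging these into the at-most-five-term formulas for $\gamma_2,\gamma_4,\gamma_5,\delta_1$ yields $\gamma_2,\gamma_4,\gamma_5\le 3\cdot2^7<2^9$ and $\delta_1\le 4\cdot2^7<2^9$, whence $n_2=n_3\le 3$, and $n_4,n_5\le 5$, $m_1\le 3$ after the sharper bookkeeping (the coefficients $a_i$ of $M_4,M_5$ are at least $3$, which tightens the degree inequality enough to bring $n_4,n_5$ down to $5$).

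The main obstacle is getting the bounds on $n,m,n_1$ sharp enough: a naive ``everything has degree $\le184$'' gives $2^n\le 185$, i.e.\ $n\le 7$, which already suffices for $n_2\le3$ only if the $\chi_7$ coefficients conspire favourably, and is too weak for $n_4\le5$ without using that $M_4$ and $M_5$ have degree $4$ and occur with exponent $\ge c_4$ (so that the degree they contribute, $4\cdot c_4=4(2^{n_4}-1)$, must fit inside $184$ minus the contributions of $x,x+1,M_1,M_3,S_1$). So the delicate step is the simultaneous degree accounting: one must track how much of the budget $184$ is already consumed by $x^a(x+1)^b M_1^{c_1}M_3^{c_3}S_1^{d_1}$ before bounding $c_4$, which is where the interplay with Lemma \ref{corol1A=sigmA} (restricting $u,v,u_1,u_3,v_1$) does the real work. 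Everything else is a short finite verification, conveniently done by the same Maple-style enumeration already used for Lemmas \ref{divsigmx2h}--\ref{divsigmS2h}.
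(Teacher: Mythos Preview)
Your plan has a genuine gap. The entire ``degree budget'' route rests on bounding $n,m,n_1$ first, and you invoke the number $184$ for this. But $184=\sum_{D\in{\cal F}}\deg(D)$ is \emph{not} a bound on $\deg(A)$: it was used in Lemmas~\ref{divsigmx2h}--\ref{divsigmS2h} only because $\sigma(S^{2h})$ is square-free with all prime factors in ${\cal F}$, hence $2h\deg(S)\le 184$. There is no a~priori bound on $\deg(A)$ at all --- Remark~\ref{nosotrivial} warns of exactly this --- so inequalities like ``$2^n\le 185$'' are unjustified, and every subsequent step that feeds bounds on $n,m,n_1$ into $\gamma_2,\gamma_4,\gamma_5,\delta_1$ collapses. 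In fact the paper proves $n,m,n_1\le 4$ only in Corollary~\ref{aboutnmn1}, \emph{after} and \emph{using} Lemma~\ref{aboutn2345m1}; your dependency is reversed.

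The irony is that you state the correct argument and then abandon it. The binary-weight observation you call less clean is exactly what the paper uses, and it needs no control on $n,m,n_1$. From Lemma~\ref{exponentsofsigmA}, $\delta_1$ is a sum of at most three powers of two (note $\chi_3(u_1)+\chi_{15}(u_1)\le 1$), with the exponents $n,m,n_1$ completely unconstrained; but $d_1=2^{m_1}v_1-1$ with $v_1\in\{1,3\}$ (Lemma~\ref{corol1A=sigmA}(v)), and $2^{m_1}-1$ has binary weight $m_1$ while $3\cdot 2^{m_1}-1$ has weight $m_1+1$ for $m_1\ge 1$, so weight $\le 3$ forces $m_1\le 3$. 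The same argument on $\gamma_2=\gamma_3$ gives $n_2,n_3\le 3$. Only then does one turn to $\gamma_4$: it is a sum of five terms $\varepsilon_k 2^{\star}$, three with unconstrained exponents $n,m,n_1$ and two with the now-bounded exponents $n_3,m_1\le 3$; since $u_4=1$, $c_4=2^{n_4}-1$, and a short case check (or simply binary weight $\le 5$) yields $n_4\le 5$, and symmetrically $n_5\le 5$.
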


\begin{proof}
We begin with the condition $2^{m_1}v_1-1 = d_1 = \delta_1 = \varepsilon_1 \cdot 2^n +\varepsilon_2 \cdot 2^m + \varepsilon_3 \cdot 2^{n_1}$, where $\varepsilon_k \in \{0,1\}$.
If $m_1 \geq 1$, then $d_1$ is odd. So, $d_1 =1$ or it is of form $2^{h_1} +1$ or $2^{h_1}+ 2^{h_2} +1$, with $h_1, h_2 \geq 1$. Since $v_1 \in \{1,3\}$, we get $m_1 \leq 3$. In the same manner, $n_2, n_3 \leq 3$.\\
Now, consider $2^{n_4}-1 = c_4 = \gamma_4 = \varepsilon_1 \cdot 2^n +\varepsilon_2 \cdot 2^m + \varepsilon_3 \cdot 2^{n_1}+ \varepsilon_4 \cdot 2^{n_3}+ \varepsilon_5 \cdot 2^{m_1}$, where $\varepsilon_k \in \{0,1\}$ and $m_1, n_3 \leq 3$. If $n_4 \geq 1$, then $c_4$ is odd. So,
$c_4 \in K_1 \cup K_2$, where $K_1 =\{1,3,5,2^{h_1}+1, 2^{h_1}+3, 2^{h_1}+ 2^{h_2} + 1, 2^{h_1}+ 2^{h_2} + 3: h_1, h_2 \geq 1\}$ and
$K_2 =\{2^{h_1}+ 2^{h_2} +2^{h_3} + \ell: \ell \in \{1,3,5,9\}, h_1, h_2, h_3 \geq 1\}$. Maple computations give:
$n_4 \leq 5$. We also have: $n_5 \leq 5$.
\end{proof}

The proof of the following lemma (sketched in \cite{Canaday}) is given in \cite{Gall-Rahav5}.

\begin{lemma} \label{omegaleq4}~\\
If $B$ is an even non splitting perfect polynomial over $\F_2$, with $\omega(B)\leq 4$, then
$B \in \{T_1, T_2, T_3,T_4,T_5, T_6,T_7, T_{10}, T_{11}\}$.
\end{lemma}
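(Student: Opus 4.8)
The plan is to treat Lemma~\ref{omegaleq4} as the base case for an eventual induction on $\omega(B)$, so the task here is purely to pin down the nine candidates under the hypothesis $\omega(B)\le 4$. First I would invoke Lemma~\ref{bar-star-stable}(i) to force $B = x^a(x+1)^b A_1$ with $a,b\ge 1$ and $A_1$ odd; since $\omega(B)\le 4$, we have $\omega(A_1)\le 2$. Splitting on $\omega(A_1)\in\{0,1,2\}$ organizes the whole argument: if $\omega(A_1)=0$ then $B$ is trivial, hence splits, contradicting the hypothesis; so $A_1\ne 1$ and $B$ has at least one, and at most two, odd prime divisors.

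Next I would exploit Lemma~\ref{bar-star-stable}(ii): each odd prime $Q\mid B$ contributes either $1+Q$ or some $\sigma(Q^{2h})$ as a divisor of $B$, and $1+Q = 1+x^{\cdot}(x+1)^{\cdot}\cdots$ carries a linear factor while $\sigma(Q^{2h})$ is odd. Counting $\omega$ carefully, each such constraint either re-uses $x,x+1$ (the $1+Q$ case) or introduces new odd primes (the $\sigma(Q^{2h})$ case, which by Lemma~\ref{S2h-squarefree}-type square-freeness and the known small factorizations in \cite{Canaday,Gall-Rahav5} has bounded $\omega$). With at most two odd primes available, this forces the exponent pattern $Q\mid B$ to be of the form $1+Q\mid B$ in most subcases, i.e.\ $Q$ appears with exponent $2^t-1$, and the pair $\{Q,\overline{Q}\}$ or $\{Q,Q^*\}$ to be closed inside the factorization. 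Then a finite case analysis on the possible degrees — using $\deg\sigma(x^a)+\deg\sigma((x+1)^b)+\cdots = \deg B$ together with the unicity statements of Canaday (an even perfect of this restricted shape is determined by the exponents of $x$ and $x+1$, resp.\ by one odd prime divisor) — yields exactly $T_1,\dots,T_7,T_{10},T_{11}$. Note $T_8,T_9$ are excluded here because $\omega(T_8)=\omega(T_9)=5$.

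The main obstacle is the bookkeeping in the two-odd-prime case $\omega(A_1)=2$: one must show that the two odd primes $Q_1,Q_2$ are interlocked in the sense that $\sigma$ of each power must land on the other (or on $x,x+1$), and that the resulting Diophantine-type conditions on the exponents $(n,m,n_1,m_1,\dots)$ have only the finitely many solutions listed. This is where Canaday's explicit analysis of $B = x^a(x+1)^bP^cQ^d$ with $P,Q$ Mersenne-like is used essentially; the point is that once the exponents of $x$ and $x+1$ (equivalently, one odd prime divisor) are fixed, $B$ is rigid. I would structure this as: (a) show $A_1$ cannot be a prime power $Q^c$ alone (that case collapses via $\sigma(Q^c)$ needing $x$ or $x+1$ with the wrong multiplicity, or reduces to $\omega\le 3$ handled in \cite{Gall-Rahav5}); (b) for $A_1 = Q_1^{c_1}Q_2^{c_2}$, show the $\sigma$-relations pair up the primes and bound all $2$-adic exponents; (c) enumerate.

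One subtlety worth flagging in the write-up: the statement is quoted as a \emph{given} result (``sketched in \cite{Canaday}\dots given in \cite{Gall-Rahav5}''), so strictly the paper does not reprove it; the honest proposal is to cite \cite{Gall-Rahav5} for the full argument and record only the reduction skeleton above, so that the reader sees why $\omega\le 4$ suffices to reach precisely this list and why $T_8,T_9$ are absent. If a self-contained proof were wanted, the obstacle would be exactly the exponent enumeration in step (b)–(c), which is finite but tedious, and which in the present paper is superseded by the systematic machinery of Section~\ref{sigmAandA} applied to the larger family $\mathcal{F}$.
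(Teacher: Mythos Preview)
Your proposal correctly identifies the paper's actual treatment: the paper does not prove Lemma~\ref{omegaleq4} at all, it simply records that the argument is sketched in \cite{Canaday} and carried out in full in \cite{Gall-Rahav5}. So the honest comparison is that your final paragraph \emph{is} the paper's proof, and everything preceding it is extra material not present in the paper.

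The reduction skeleton you supply is broadly reasonable but contains one slip worth fixing before you commit it to writing. In step~(a) you say ``show $A_1$ cannot be a prime power $Q^c$ alone''; this is false as stated, since $T_1,T_2,T_3,T_4$ each have $\omega(A_1)=1$. What you actually mean is that the case $\omega(A_1)=1$ (equivalently $\omega(B)\le 3$) is already handled in \cite{Gall-Rahav5} and yields $T_1,\ldots,T_4$, leaving the two-odd-prime case $\omega(A_1)=2$ (which produces $T_5,T_6,T_7,T_{10},T_{11}$) as the only genuinely new work. Rephrased that way, your outline matches the structure of the cited reference.
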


\begin{corollary} \label{aboutnmn1}
One has: $n,m ,n_1 \leq 4$.
\end{corollary}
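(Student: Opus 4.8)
The goal is Corollary \ref{aboutnmn1}: $n, m, n_1 \le 4$. The plan is to treat the three exponents uniformly, since $x$, $x+1$, and $M_1$ play symmetric roles in the relevant sums from Lemma \ref{exponentsofsigmA}, and to combine the structural constraints on the exponents already assembled (Lemmas \ref{corol1A=sigmA}, \ref{aboutn2345m1}) with the low-$\omega$ classification (Lemma \ref{omegaleq4}).

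First I would write down, using $A = \sigma(A)$ and the formulas of Lemma \ref{exponentsofsigmA}, the equation governing $n$: from $a = 2^n u - 1$ and $\sigma(x^a) = x^{\alpha}\cdots$ we have $\alpha = a$, hence
\[
2^n u - 1 = 2^m - 1 + \sum_{i=1}^5 (2^{n_i}-1)a_i + \sum_{j=1}^8 (2^{m_j}-1)\alpha_j .
\]
By Lemma \ref{corol1A=sigmA} and Lemma \ref{aboutn2345m1} most of the exponents $n_i, m_j$ occurring on the right are already bounded (by $3$, $5$, or forced to small values), and the $a_i, \alpha_j$ are fixed small constants read off from the explicit polynomials $M_i, S_j$. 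The two genuinely unbounded parameters on the right are $2^m$ (coefficient $1$) and $2^{n_1}$ (appearing via those $(2^{n_i}-1)a_i$, $(2^{m_j}-1)\alpha_j$ terms whose index refers back to $M_1$, i.e. $c_1 = 2^{n_1}u_1 - 1$). So the equation has the shape $2^n u - 1 = (\text{bounded constant}) + 2^m + \lambda\, 2^{n_1}$ for a small integer $\lambda$, with $u \in \{1,3,5,7,9,13,15\}$. The analogous equations for $m$ and $n_1$ have the same shape with $n$, $m$, $n_1$ permuted.

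The key step is then a finiteness/descent argument on the pair (in fact triple) of large exponents. If $n$ is large, then $2^n u$ is much bigger than any bounded constant, so one of $2^m$, $2^{n_1}$ must be comparably large; running the same reasoning on that one forces a third to be large; but there are only three such exponents, so this propagates into a tight system of three equations in $2^n, 2^m, 2^{n_1}$ with bounded ``error'' terms and the multipliers $u, v, u_1$ drawn from a seven-element set. A binary-digit analysis (exactly as in the proof of Lemma \ref{aboutn2345m1}: an odd value on one side forces the 2-adic pattern on the other) caps each of $n, m, n_1$ by an explicit small number — and then a finite Maple search over the resulting finite box of exponent tuples, discarding those not yielding $A = \sigma(A)$, leaves only configurations with $\omega(A) \le 4$, to which Lemma \ref{omegaleq4} applies and pins $A$ down; reading off $n, m, n_1$ from $T_1, \ldots, T_7, T_{10}, T_{11}$ gives $n, m, n_1 \le 4$. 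Alternatively, and more in the spirit of the preceding lemmas, one shows directly that $c_1 = 2^{n_1}u_1 - 1 = \gamma_1$ with $\gamma_1$ expressed via $\xi_1 2^n + \xi_2 2^m + \chi_3(u_2)2^{n_2} + \chi_3(u_3)2^{n_3} + \sum (2^{m_j}-1)\nu_j$: here $u_1 \in \{1,3,5,7,15\}$ and $\gamma_1$ is a sum of at most a bounded number of powers of $2$ plus a bounded constant, so its binary weight is bounded, which forces $n_1 \le 4$; and symmetrically the equations $a = \alpha$, $b = \beta$ bound $n$ and $m$ once $n_1$ is known to be small.

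The main obstacle will be the mutual dependence: the bound on $n$ (from $a = \alpha$) uses that $n_1$ is small, the bound on $n_1$ (from $c_1 = \gamma_1$) uses that $n$ and $m$ are small, and the bound on $m$ likewise. So one cannot bound any single one in isolation; one must handle the coupled system, either by the descent argument sketched above (large $\Rightarrow$ another is large $\Rightarrow$ all three large $\Rightarrow$ contradiction or a finite list) or by a simultaneous 2-adic valuation argument on the three equations $\{a=\alpha, b=\beta, c_1=\gamma_1\}$ treating $\{2^n, 2^m, 2^{n_1}\}$ as the unknowns. Once the coupling is broken and a finite box of candidate tuples is obtained, the remainder is a routine (if somewhat lengthy) computer verification, after which Lemma \ref{omegaleq4} closes out the small-$\omega$ survivors.
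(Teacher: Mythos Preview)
Your approach differs from the paper's, and the ``main obstacle'' you flag---the circular dependence among $n,m,n_1$ in the three equations $a=\alpha$, $b=\beta$, $c_1=\gamma_1$---is never actually broken: the ``descent'' and ``simultaneous $2$-adic'' sketches remain hand-waving, and your claim that only $2^m$ and $2^{n_1}$ are unbounded on the right of $a=\alpha$ is not justified by Lemmas~\ref{corol1A=sigmA}--\ref{aboutn2345m1} alone (those bound $n_2,n_3,n_4,n_5,m_1$ but say nothing yet about $m_2,\dots,m_8$). So the proposal has a genuine gap precisely at the point you identify.

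The paper sidesteps the coupling entirely by \emph{not} using the big equations $a=\alpha$, $b=\beta$, $c_1=\gamma_1$. Instead it cases on the value of $u$ (symmetrically $v$, $u_1$) and for each value picks a \emph{small} auxiliary equation whose left side is already bounded. If $u=7$, then $\chi_7(u)\,2^n$ sits inside $\gamma_2$, and $\gamma_2=c_2=2^{n_2}u_2-1\le 23$ by Lemma~\ref{aboutn2345m1}, giving $n\le 4$. If $u\in\{9,13,15\}$, one reads off $n$ from $\delta_4,\delta_3,\gamma_5$ respectively (e.g.\ $\delta_4=2^n$ while $d_4=2^{m_4}-1$, forcing $n=0$). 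Similarly $u_1=3$ is handled via $\delta_1$ ($2^{n_1}\le d_1\le 23$) and $u_1\in\{5,7,15\}$ via $\delta_7,\delta_2,\delta_8$. Only the residual case $u,v\in\{1,3,5\}$, $u_1=1$ remains; there all $\delta_j$ and $\gamma_2=\gamma_3$ vanish, so $A=x^a(x+1)^bM_1^{c_1}M_4^{c_4}M_5^{c_5}$ and Lemma~\ref{omegaleq4} finishes. The idea you are missing is to exploit the equations $c_i=\gamma_i$ ($i\ge 2$) and $d_j=\delta_j$, whose right-hand sides contain $2^n$, $2^m$, $2^{n_1}$ only through indicator functions of $u,v,u_1$, and whose left-hand sides are already small---this breaks the circularity without any coupled-system analysis.
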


\begin{proof}
We know that $u,v \in \{1,3,5,7,9,13,15\}, u_1 \in \{1,3,5,7,15\}$ and $u_2, u_3, v_1 \in \{1,3\}$, with $u \geq 3$ or $v \geq 3$,
$n_2, n_3, m_1 \leq 3$.\\
- If $u=7$ then from the expression of $\gamma_2 = c_2 = 2^{n_2} u_2 -1$, we get $2^n \leq \gamma_2 =2^{n_2} u_2 -1 \leq 23$. So, $n \leq 4$.\\
- If $u \in \{9,13,15\}$, then  $n=0$ (from the expressions of $\delta_4$, $\delta_3$ and $\gamma_5$).\\
- Analogously, if $v \in \{7,9,13,15\}$, then $m \leq 4$.\\
- If $u_1 = 3$, then $2^{n_1} \leq \delta_1 = d_1 = 2^{m_1} v_1 -1 \leq 23$. So, $n_1 \leq 4$.\\
- If $u_1 \in \{5,7,15\}$, then  $n_1=0$ (from the expressions of $\delta_7$, $\delta_2$ and $\gamma_8$).\\
- It remains the case where $u,v \in \{1,3,5\}$ (with $u \geq 3$ or $v \geq 3$) and $u_1 = 1$. We immediately have: $d_j = \delta_j = 0$ for any $j\geq 1$ and $c_2 =c_3 = \gamma_2 = \gamma_3 = 0$. Thus $A = x^a(x+1)^b {M_1}^{c_1} {M_4}^{c_4} {M_5}^{c_5}$. We may suppose that $u \in \{3,5\}$, and we apply Lemma \ref{omegaleq4}:\\
$\bullet$ If $u= 3$, then $c_4 = \gamma_4 = 0$. So, $\omega(A) \leq 4$, $c_5 =0$ and $A =T_1$, $n =0$, $m=1$.\\
$\bullet$ If $u=5$, then $c_1 = \delta_1 = 0$, $A = x^a(x+1)^b {M_4}^{c_4} {M_5}^{c_5}$. Hence, $A = T_5$ and $n=m=0$.
\end{proof}

\begin{corollary} \label{finalconditions}
If $A$ is perfect, then $u_4 = u_5=1$, $d_j \in \{0,1\}$ for $j \geq 2$ and
$u,v \in \{1,3,5,7,9,13,15\}, u_1 \in \{1,3,5,7,15\}, u_2, u_3, v_1 \in \{1,3\}$,\\
$n,m ,n_1 \leq 4, \  n_2, n_3, m_1 \leq 3, \ n_4, n_5 \leq 5.$
\end{corollary}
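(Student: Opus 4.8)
The plan is to harvest Corollary \ref{finalconditions} directly from the lemmas already proved, treating it as a bookkeeping consolidation rather than a genuinely new argument. First I would recall the standing hypothesis: $A$ is an even non-splitting binary polynomial of the form (\ref{formofA}) with all odd prime divisors in ${\cal F}$, and $A$ is indecomposable perfect, so that $\sigma(A)=A$ and we may freely use the parametrizations (\ref{lesexposants}), the expansion (\ref{sigmaAdetails}), the list (\ref{expressionsigmA}), and Lemma \ref{exponentsofsigmA}.

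The bounds on the odd parts $u,v,u_1,\dots$ come straight from Lemma \ref{corol1A=sigmA}: parts i)--v) give $u,v\in\{1,3,5,7,9,13,15\}$ with $u\geq 3$ or $v\geq 3$, $c_i=d_j=0$ for $i\geq 6$, $j\geq 9$, $u_1\in\{1,3,5,7,15\}$, $u_2,u_3\in\{1,3\}$, $u_4=u_5=1$, $v_1\in\{1,3\}$ and $v_j=1$ for $2\leq j\leq 8$ (hence $d_j=2^{m_j}-1\in\{0,1,3,7,\dots\}$, and combined with Corollary \ref{theoddivisors} ii), which forbids ${S_j}^2\mid\sigma(A)=A$ for $j\in\{2,\dots,6\}$, together with the $\delta_j$-formulas of Lemma \ref{exponentsofsigmA} showing each $\delta_j$ for $j\geq 2$ is a single term $\chi_{\star}(\cdot)\cdot 2^{\star}$, one gets $d_j=\delta_j\in\{0,1\}$ for $j\geq 2$). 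The bounds on the $2$-adic exponents are then assembled from Lemma \ref{aboutn2345m1}, which gives $n_2,n_3,m_1\leq 3$ and $n_4,n_5\leq 5$, and Corollary \ref{aboutnmn1}, which gives $n,m,n_1\leq 4$. Concatenating these statements is the whole proof; essentially one writes ``this is the conjunction of Lemma \ref{corol1A=sigmA}, Corollary \ref{theoddivisors}, Lemma \ref{aboutn2345m1}, Corollary \ref{aboutnmn1}, together with Lemma \ref{exponentsofsigmA} to pin down $d_j\in\{0,1\}$ for $j\geq 2$.''

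The only point requiring a sentence of genuine argument (rather than pure citation) is the claim $d_j\in\{0,1\}$ for $j\geq2$: I would note that for each such $j$ the formula in Lemma \ref{exponentsofsigmA} expresses $\delta_j$ as a single monomial $\varepsilon\cdot 2^{e}$ with $\varepsilon\in\{0,1\}$; since $v_j=1$ forces $d_j=2^{m_j}-1$, equating $2^{m_j}-1=\varepsilon\cdot 2^e$ and using that $2^{m_j}-1$ is odd (when $m_j\geq 1$) yields $2^e$ odd, i.e. $e=0$, whence $d_j=\varepsilon\in\{0,1\}$; and if $m_j=0$ then $d_j=0$ directly. I expect no real obstacle here: the content of the corollary has already been done in the preceding lemmas, and the main (trivial) danger is simply miscopying an index range or omitting one of the cited results, so I would double-check the ranges $i\leq 5$, $j\leq 8$ against Corollary \ref{theoddivisors} i) before finalizing.
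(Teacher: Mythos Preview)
Your proposal is correct and matches the paper's treatment: Corollary \ref{finalconditions} is stated there without proof precisely because it is the concatenation of Lemma \ref{corol1A=sigmA}, Lemma \ref{aboutn2345m1}, and Corollary \ref{aboutnmn1}, together with the $\delta_j$-formulas of Lemma \ref{exponentsofsigmA}. Your extra sentence deriving $d_j\in\{0,1\}$ for $2\leq j\leq 8$ from $2^{m_j}-1=\varepsilon\cdot 2^{e}$ is exactly the missing bookkeeping step, and the citation of Corollary \ref{theoddivisors} ii) is harmless though redundant (the single-monomial argument already covers $j=7,8$ as well).
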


\section{Proof of Theorem \ref{mainresult1}}
The conditions are sufficient. Thus, we  prove  that they are necessary. We may write (see notations in (\ref{formofA}) and in (\ref{lesexposants})):
$$\displaystyle{A = x^a(x+1)^b \prod_{i=1}^{13} {M_i}^{c_i} \cdot \prod_{j=1}^{15} {S_j}^{d_j}=x^a(x+1)^b \ A_1},$$
where $\text{$a,b, c_i, d_j \in \N, \ a,b \geq 1$}$,
$a =2^nu-1,\ b=2^mv-1,\ c_i = 2^{n_i}u_i-1,\ d_j = 2^{m_j}v_j-1, i \leq 13, j \leq 15,$
for some odd integers $u,v, u_i, v_j$ and for some $n,m, n_i, m_j \in \N$.

Recall also that $A$ is indecomposable and it does not split. So, $A_1 \not= 1$. Corollary \ref{specialcases}, obtained from Lemma \ref{corol1A=sigmA} and Corollary \ref{finalconditions},  gives an upper bound of each integer $n,m, n_1,\ldots$ appearing in $A$.

\begin{corollary} \label{specialcases}~\\
i) For $i\geq 6$ and $j \geq 9$, one has: $c_i =0$ and $d_j=0$.\\
ii) The integers $u,v,n,m,n_j, m_j,u_j,v_j$ satisfy:\\
$\mbox{$u,v \in \{1,3,5,7,9,13,15\}, u_1 \in \{1,3,5,7,15\}, 1 \leq u_2, u_3, v_1 \leq 3, u_4=u_5= 1$,}$\\
$n,m,n_1 \leq 4, \  n_2, n_3, m_1 \leq 3, \ n_4, n_5 \leq 5 \text{\ and for } j \geq 2, v_j = 1,
m_j \leq 1.$
\end{corollary}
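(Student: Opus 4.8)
The statement is essentially an assembly of the bounds already proven in Section \ref{sigmAandA}, so the plan is largely bookkeeping: match each claimed restriction with the lemma or corollary that produces it. First I would dispose of part i): this is exactly Lemma \ref{corol1A=sigmA}-ii), which states $c_i=0$ and $d_j=0$ for $i\geq 6$ and $j\geq 9$, a consequence of Corollary \ref{theoddivisors}-i) (for $i\geq 6$, $j\geq 9$, neither $M_i$ nor $S_j$ can divide $\sigma(A)=A$, so their exponents vanish). Nothing new is needed here.

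For part ii), I would read off the restrictions on the ``odd parts'' $u,v,u_i,v_j$ from Lemma \ref{corol1A=sigmA}: item i) gives $u,v\in\{1,3,5,7,9,13,15\}$; item iii) gives $u_1\in\{1,3,5,7,15\}$; item iv) gives $u_2,u_3\in\{1,3\}$ and $u_4=u_5=1$; and item v) gives $v_1\in\{1,3\}$ and $v_j=1$ for $2\leq j\leq 8$ (combined with i), which kills $d_j$ for $j\geq 9$, this yields $v_j=1$ for all $j\geq 2$). Next, for the bounds on the $2$-adic exponents $n,m,n_i,m_j$: Lemma \ref{aboutn2345m1} supplies $n_2,n_3,m_1\leq 3$ and $n_4,n_5\leq 5$; Corollary \ref{aboutnmn1} supplies $n,m,n_1\leq 4$. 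The only item left to justify is $m_j\leq 1$ for $j\geq 2$: since $v_j=1$ there, the exponent is $d_j=2^{m_j}-1$, and by Corollary \ref{theoddivisors}-ii) the square ${S_j}^2$ does not divide $\sigma(A)=A$ for $j\in\{2,\dots,6\}$, forcing $d_j\in\{0,1\}$, i.e. $m_j\leq 1$; for $j\in\{7,8\}$ the same conclusion $v_j=1$ together with the fact (from Lemmas \ref{alldivsigmMers2h}, \ref{divsigmS2h}) that $S_7,S_8$ occur with exponent $0$ or $1$ in every relevant $\sigma(S^{2h})$ gives $\delta_j\in\{0,1\}$ and hence $d_j\leq 1$.

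I do not expect any genuine obstacle: every inequality in the statement has already been established earlier in the paper, and the corollary merely consolidates Lemma \ref{corol1A=sigmA}, Lemma \ref{aboutn2345m1}, Corollary \ref{aboutnmn1}, and Corollary \ref{finalconditions} into a single reference point for the proof of Theorem \ref{mainresult1}. The only place requiring a word of care is the passage from ``$d_j$ odd of a constrained shape'' to the explicit numerical bounds, but that is exactly what the Maple computations in the proofs of Lemma \ref{aboutn2345m1} and Corollary \ref{aboutnmn1} already handle. So the write-up is a short paragraph citing those results; if anything, the mild subtlety is making sure the case $u=v=1$ (excluded because $A$ is indecomposable and $A_1\neq 1$, as in Lemma \ref{corol1A=sigmA}-i)) is recalled so that the hypothesis $u\geq 3$ or $v\geq 3$ used in Corollary \ref{aboutnmn1} is in force.
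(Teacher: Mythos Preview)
Your proposal is correct and matches the paper's own treatment: the paper gives no separate proof of Corollary \ref{specialcases} beyond the sentence that it is ``obtained from Lemma \ref{corol1A=sigmA} and Corollary \ref{finalconditions},'' and your write-up is precisely a detailed unpacking of that citation. One small imprecision: for $j\in\{7,8\}$ the square-freeness of the $\sigma(S^{2h})$ alone does not force $\delta_j\in\{0,1\}$, since $\delta_j$ carries a factor $2^{n_1}$ (see Lemma \ref{exponentsofsigmA}); the clean argument is that $d_j=2^{m_j}-1$ must equal a single term $\varepsilon\cdot 2^{n_1}$ with $\varepsilon\in\{0,1\}$, which forces $m_j\le 1$ (and $n_1=0$ when $\varepsilon=1$)---or simply invoke Corollary \ref{finalconditions}, as you do in your final paragraph.
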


We (quickly) get our theorem in three steps (using {\tt{Maple}}).
First, we dress a list of all $[n,u,m,v,n_1,u_1,n_2,u_2]$ such that $a \geq 1, a \leq b$ and $c_2 =\gamma_2$ (see Lemmas \ref{exponentsofsigmA} and \ref{corol1A=sigmA}). We obtain $10944$ such $8$-tuples. In the second step, all the conditions: $d_j = \delta_j$ give $4484$ $18$-tuples of the form $[n,u,m,v,n_1,u_1,n_2,u_2, d_1, \ldots, d_8,m_1,v_1]$. In the third step, we apply the conditions: $a=\alpha$ and $b=\beta$. We get $44$ polynomials. Among  them, we find the $A$'s such that $a\leq b$ and $\sigma(A) + A$ equals $0$.

\begin{remark}
\emph{Inspired by the proof of Theorem \ref{mainresult1},
if we replace in ${\cal{F}}$, ${\cal{F}}_1$ by the set of all Mersenne primes, then we also get: $T_1, \ldots, T_{11}$.}
\end{remark}

\end{document}